\newlength\Colsep
\newtheorem{Theorem}{Theorem}
\newtheorem{Definition}{Definition}
\newtheorem{Lemma}{Lemma}
\newtheorem{Proposition}{Proposition}
\newtheorem{Corollary}{Corollary}
\newcommand{\inr}[1]{\bigl< #1 \bigr>}
\DeclareMathOperator*{\argmin}{argmin}
\def\ds1{\textrm{1\kern-0.25emI}} 
\newcommand \E{\mathbb{E}}
\newcommand \R{\mathbb{R}}
\newcommand \cC{{\cal C}}
\newcommand \cD{{\cal D}}
\newcommand \cL{{\cal L}}
\newcommand \cN{{\cal N}}
\newcommand \bE{{\mathbb E}}
\newcommand \bI{{\mathbb I}}
\newcommand \bP{{\mathbb P}}
\newcommand \bR{{\mathbb R}}
\newcommand \bX{{\mathbb X}}
\newcommand \bY{{\mathbb Y}}
\begin{document}
	
	\begin{frontmatter}
		
		\title{On the robustness of the minimim $\ell_2$ interpolator}
		
		\begin{aug}
			\author{\fnms{Geoffrey}  \snm{Chinot}\thanksref{a}\ead[label=e1]{geoffrey.chinot@stat.math.ethz.ch}}
			\and	 
			\author{\fnms{Matthieu}  \snm{Lerasle}\thanksref{b}\ead[label=e2]{matthieu.lerasle@ensae.fr}}

			
			\affiliation{ENSAE, CREST, Institut Polytechnique de Paris}
			
			\address[a]{ETHZ, Rämistrasse 101, 8092 Zürich.\\
			 \printead{e1}}
			 \address[b]{CNRS, ENSAE, CREST, 5 avenue Henri Chatelier 91120 Palaiseau, France.\\
			 \printead{e2}}			
		\end{aug}
		
		\begin{abstract}
		 We analyse the interpolator with minimal $\ell_2$-norm $\hat{\beta}$ in a general high dimensional linear regression framework where $\bY=\bX\beta^*+\xi$ where $\bX$ is a random $n\times p$ matrix with independent $\cN(0,\Sigma)$ rows and without assumption on the noise vector $\xi\in \R^n$.
		 We prove that, with high probability, the prediction loss of this estimator is bounded from above by $(\|\beta^*\|^2_2r_{cn}(\Sigma)\vee \|\xi\|^2)/n$, where $r_{k}(\Sigma)=\sum_{i\geq k}\lambda_i(\Sigma)$ are the rests of the sum of eigenvalues of $\Sigma$.
		 These bounds show a transition in the rates. For high signal to noise ratios, the rates $\|\beta^*\|^2_2r_{cn}(\Sigma)/n$ broadly improve the existing ones. For low signal to noise ratio, we also provide  lower bound holding with large probability. Under assumptions on the sprectrum of $\Sigma$, this lower bound is of order $\| \xi\|_2^2/n$, matching the upper bound. Consequently, in the large noise regime, we are able to precisely track the prediction error with large probability. This results give new insight when the interpolation can be harmless in high dimensions. 
		\end{abstract}
		
		\begin{keyword}
			\kwd{Interpolation problems, statistical learning, robustness}
		\end{keyword}
		
	\end{frontmatter}

\section{Introduction} 

In this paper, we consider the problem of estimating a vector $\beta^*\in \R^p$ from possibly noisy observations of random projections of it.
Let $\bX\in \R^{n\times p}$ denote a random matrix with rows $X_i^T$.
The observations can therefore equivalently be written 
\[
y_i=\inr{X_i,\beta^*}+\xi_i,\qquad i\in\{1,\ldots,n\}\enspace,
\]
or, in the matrix form 
\[
\bY=\bX\beta^*+\xi\enspace.
\]
The vector $\xi=(\xi_1,\ldots,\xi_n)^T$ is called the noise.
This problem is classical in signal processing and in statistics where it is known as the linear regression problem. 
In particular, the Gaussian linear regression problem is the problem of recovering $\beta^*$ when $\xi$ is independent from $\bX$ and Gaussian.
The arguably most famous estimator is least-squares estimator defined as 
\[
\hat{\beta}\in\argmin_{\beta\in \R^p}\|\bY-\bX\beta\|^2_2\enspace,
\]
where $\|\cdot\|_2$ denotes the usual Euclidean norm in $\R^d$, whatever $d\geqslant 2$.
The quality of $\hat{\beta}$ can be assessed through upper and lower bounds on the estimation error $\|\hat{\beta}-\beta^*\|_2$.
When the rows $X_i^T$ of $\bX$ are i.i.d. with second moment matrix $\Sigma=\E[X_1X_1^T]$, another popular quality measure is the prediction error 
\[
\|\Sigma^{1/2}(\hat{\beta}-\beta^*)\|_2=\sqrt{\E[\inr{X,\hat{\beta}-\beta^*}^2|\bX,\bY]}\enspace.
\]
In this formulation, $X$ denote an independent copy of $X_1$, independent from $\bX,\bY$. The prediction error therefore measures how far are the predictions at a typical point $X$ by $\hat{\beta}$: $\inr{X,\hat{\beta}}$ and by the actual signal $\beta^*$: $\inr{X,\beta^*}$.
Both risks are random variables and upper and lower bounds for these risks, in expectation and with high probability are now well known in the small dimensional Gaussian linear problem where $p<n$.

These bounds deteriorate as the dimension grows and the least-squares estimator behaves poorly when $p\asymp n$.
This can be understood as follows: In high dimension where $p\geqslant n$, the set of least-squares estimators is typically infinite. 
Actually, when the matrix $\bX$ has full rank $n$, its null space is non trivial and any solution in the set $\{\bX^g\bY\}$, where $\bX^g$ describes all pseudo-inverses of $\bX$ satisfy $\bX\bX^g\bY=\bY$.
In other words, in large dimension, least-squares estimators interpolate data.
This kind of behavior is typically undesirable in statistics, as the estimators clearly overfit the observed dataset, and have usually poor generalization abilities.
The least-squares estimators are not the only estimators suffering this kind of limitation, actually, this feature is shared with any estimator without further assumptions on $\beta^*$, a phenomenon known as the curse of dimensionality in statistics.

The classical trick in high dimensional statistics to bypass this issue is to assume structural assumptions on $\beta^*$, such as sparsity or regularity assumptions.
This has given rise to an impressive literature these last decades.
We cannot review here this massive literature. The interested reader can find comprehensive introductions to these topics in the textbooks \cite{buhlmann2011statistics,giraud2014introduction,vershynin2018high,wainwright2019high, fan2020statistical} and the references therein.
Let us just mention that this approach was proved efficient, for example in the high dimensional Gaussian regression problem. 
Among popular such algorithms, one can mention basis pursuit \cite{ChenDonohoSaunders98}, ridge regression~\cite{hoerl1970ridge,casella1980minimax}, the LASSO~\cite{tibshirani1996regression,van2008high,bickel2009simultaneous} and the elastic net~\cite{zou2005regularization,de2009elastic}.

We do not pursue this path in this paper, we want to tackle the problem in high dimension, that is when $p\geqslant n$, without any assumptions on $\beta^*$.
As we said, bounding the estimation error $\|\hat{\beta}-\beta^*\|_2$ in this context remains impossible in general~\cite{vershynin2015estimation}. However, and perhaps counter-intuitively, \cite{bartlett2019benign} discovered that, when the dimension $p$ is large in front of $n$, the prediction risk $\|\Sigma^{1/2}(\hat \beta-\beta^*)\|_2^2$ can be small for the least-squares estimator $\hat{\beta}=\bX^{\dagger}\bY$, where $\bX^{\dagger}$ is the Moore-Penrose pseudo-inverse of $\bX$.
They proved that this holds in the Gaussian regression problem, when the lines $X_i^T$ of $\bX$ are i.i.d. with Gaussian distribution $\cN(0,\Sigma)$, under some conditions on the spectrum of $\Sigma$.
An important take home message of \cite{bartlett2019benign} is therefore the following
\begin{center}
	\textbf{In high dimension, even without structural assumptions on $\beta^*$, it is still possible to predict well. }
\end{center}

This interesting phenomenon has given rise to a rapidly growing literature these last months, see~\cite{belkin2019reconciling,belkin2019two,belkin2018overfitting,belkin2018does, bunea2020interpolation, feldman2019does,liang2018just,mei2019generalization}.
This success is not surprising as many algorithms in machine learning require to fit a huge number of parameters with a smaller number of data.
The most famous examples are neural networks for which it has been repeatedly observed empirically that enlarging the network, hence, the number of parameters, may help to improve prediction performance~\cite{advani2017high,belkin2019reconciling,zhang2016understanding}. 
Of course, the linear prediction problem here is much simpler than understanding the predictions of neural networks, but it is interesting to understand when and how high dimension helps prediction. 
Moreover, several recent works have shown that the analysis of linear models can be relevant for over-parametrized neural networks, see for example \cite{chatterji2020does}. 
A reason is that, when neural networks are trained by gradient descent properly initialized, they are well approximated by a linear model in a Hilbert space. 
This method is known as~\emph{neural tangent kernel} approach~\cite{jacot2018neural, bietti2019inductive, arora2019exact, lee2019wide}. 
Understanding the generalization of over-parametrized linear models could therefore be seen as a first step in the direction of understanding deep learning.\\

In this paper, as in \cite{bartlett2019benign}, we analyse the least-squares estimator $\bX^{\dagger}\bY$, where $\bX^{\dagger}$ denotes the Penrose Moore inverse of $\bX$, which is the least-squares estimator with minimal Euclidean norm. 
We also assume that $\bX$ has i.i.d. Gaussian $\cN(0,\Sigma)$ lines.
We will assume all along the paper that we are in the high dimensional regime where $p \geq n$ and that $\Sigma$ has rank larger than $n$, which implies in particular that $\bX$ has a.s. full rank $n$. 
In this setting, the estimator can be equivalently defined as the minimum norm interpolator
\begin{equation} \label{def_interpolation}
\hat \beta = \argmin_{\beta \in \bR^p } \|\beta\|_2 \quad \textnormal{subject to}  \quad \bX \beta = \bY \enspace.
\end{equation}
The main difference with \cite{bartlett2019benign} is that we relax all assumptions on the noise $\xi$. 
This shows the robustness of $\hat{\beta}$ to various contaminations of the response data $\bY$, as the noise, for example, can be deterministic (and hence null), random with any distributions (hence allowing heavy tailed perturbations), or even adversarial. 
Similar assumptions have been considered in the compressed sensing community~\cite{Wojtaszczyk10}, in a different problem and with the Euclidean norm replaced by the $\ell_1$-norm.

Our main results give upper and lower bounds on the prediction risk of $\hat\beta$, $\|\Sigma^{1/2}(\hat \beta-\beta^*)\|_2^2$.
The bounds are typically dependent on the noise $\|\xi\|_2$ and are therefore random in general. 
To deduce deterministic bounds, an independent analysis of this term should be performed in each situation.
As in \cite{bartlett2019benign}, the bounds are interesting under assumptions on the spectrum of the covariance matrix $\Sigma$. 
These assumptions involve the rest of the series of singular values of the matrix $\Sigma$, $r_{k}(\Sigma)=\sum_{i=k}^{p}\lambda_i(\Sigma)$.
Our bounds exhibit a phase transition when the signal to noise ratio $\text{SNR}=\|\beta^*\|_2^2/ \| \xi \|_2^2$ becomes larger than a threshold $t=1/r_{cn}(\Sigma)$, where $c$ is an absolute constant.
\begin{itemize}
 \item For high signal to noise ratios: $\text{SNR}>t$, the prediction risk of the estimator satisfies, $\|\Sigma^{1/2}(\hat{\beta}-\beta^*)\|_2^2\lesssim \|\beta^*\|^2r_{cn}(\Sigma)/n$, with large probability. This result improves the one presented in~\cite{bartlett2019benign} in two ways: first they only reached in this regime the more pessimistic upper bound $\|\Sigma^{1/2}(\hat{\beta}-\beta^*)\|_2^2\lesssim \|\beta^*\|^2\sqrt{\text{Tr}(\Sigma)/n}$.
 Notice that, in a different framework the improvement on the rate from $\sqrt{1/n}$ to $1/n$ for interpolators already appeared in \cite{bunea2020interpolation}.
 Second, we prove that, for interpolators, these fast rates of convergence hold with probability $1-ce^{- n/c}$, for some absolute constant $c$, while in~\cite{bartlett2019benign} the results where established with probability $1-ce^{- c\textnormal{Tr}(\Sigma)}$.
 \item When the SNR is low, $\text{SNR}\leq t$, we show that with probability $1-ce^{- n/c}$,
$$
\| \Sigma^{1/2} (\hat \beta - \beta^*) \|_2^2 \leq c \frac{\| \xi \|_2^2}{n} \enspace.
$$
In Gaussian linear regression with $\xi\sim\cN(0,\sigma^2\bI)\in \R^n$, this rates becomes $\sigma^2$ with probability $1-ce^{-n/c}$, which matches the minimax rate of convergence for this confidence level \cite[Theorem A' ]{lecue2013learning}.
Besides, we prove that this bound cannot be improved in general as, when the noise in independent from $\bX$ (whatever its distribution), indeed, for a well chosen $\bar{k}\leqslant p$ (see Section~\ref{sec:MainResults} for a precise definition),
\[
\| \Sigma^{1/2} (\hat \beta - \beta^*) \|_2^2\geqslant c \frac{\| \xi \|_2^2}{n\wedge \bar{k} } \enspace.
\]
Therefore, in the particular case where $p = cn$ for example, this yields $\| \Sigma^{1/2} (\hat \beta - \beta^*) \|_2^2 \asymp \| \xi \|_2^2 / n$. 
More generally, for any spectrum of $\Sigma$, such that $\bar k \leq cn$ (we provide an example at the end of Section~\ref{sec:MainResults} where this holds while $n=o(p)$), upper and lower bounds match $\| \Sigma^{1/2} (\hat \beta - \beta^*) \|_2^2 \asymp \| \xi \|_2^2 / n$. 
Interestingly, the lower bounds are also obtained with probability $1-ce^{-n/c}$.
In this regime, the comparison with \cite{bartlett2019benign} is less clear.
On one hand, our result are more general as they allow any kind of noise and, when the noise is Gaussian, improve the bounds of \cite{bartlett2019benign} at confidence levels $1-ce^{-n/c}$. 
On the other hand, when the noise is Gaussian $\xi\sim \cN(0,\sigma^2\bI)$, our bounds do not shrink to $0$ as $n\to \infty$ while those in \cite{bartlett2019benign} might at smaller confidence levels. \\
Our lower bound depends on a new parameter $\bar k$ that was not present in the previous work of~\cite{bartlett2019benign}. It gives new insights when the overfitting can be harmless.  
\end{itemize}

Our extension to general noise is based on the following observation: As $\hat \beta$ interpolates, we have
	\begin{equation}\label{eq_deviations}
	\| \Sigma^{1/2} (\hat \beta - \beta^*) \|_2^2 +  \underbrace{\frac{1}{n} \sum_{i=1}^n \langle X_i, \hat \beta - \beta^* \rangle^2 - \| \Sigma^{1/2} (\hat \beta - \beta^*) \|_2^2}_{\textnormal{deviation}} = \frac{\| \xi \|_2^2}{n}  \enspace.
	\end{equation}
	The main technical contribution of the paper is then a control of the deviation term that holds independently to the form of the noise $\xi$.
%
	The control of the deviation term is possible using a preliminary results showing that dimension may help to localize this estimator with respect to the estimation norm $\|\hat \beta-\beta\|_2$. Heuristically, since the dimension of the set of interpolators increases with the dimension $p$, it is expected that $\| \hat \beta \|_2$ also decreases with the dimension. More precisely, we show that$\| \hat \beta - \beta^* \|_2 \leq \| \beta^* \|_2 + r(\Sigma)$, where  $r(\Sigma)$ is a remainder term controlling the improvement with the dimension $p$.  

The paper is divided in two parts, Section~\ref{sec:MainResults} presents the assumptions and main results, and discuss the general case in particular situations of interest. The main proofs are gathered in Section~\ref{sec:proofs}.
 
\paragraph{Notations}
 For any symmetric matrix $A \in \bR^{n \times n}$, we denote by $\lambda_1(A) \geq \cdots \geq \lambda_n(A)$ its eigenvalues in the non-increasing order and by $r_k(A)=\sum_{i=k}^n\lambda_i(A)$. 
More generally, for any matrix $B \in \bR^{n \times p}$, we denote by $\sigma_1(B) \geq \cdots \geq \sigma_{min}(B) > 0$, its positive singular values in the non-increasing order. 
The operator norm of $B$ is denoted by $\|B\| = \sigma_1(B)$. 
 For any symmetric positive semi-definite matrix $A$, let $\|\beta\|_A=\sqrt{\beta^TA\beta}$.
 Let $S(r)$ (resp. $S_A(r)$) denote the sphere in $\bR^p$ with radius $r$ with respect to the Euclidean norm $\|\cdot\|_2$ (resp. with respect to the semi-norm $\|\cdot\|_A$).
 Define similarly $B(r)$ and $B_A(r)$ to be the balls with radius $r$. 
 All along the paper, $c,c_1,c_2 \cdots$ denote absolute positive constants whose values may change from one instance to another.
 
\section{Main results}\label{sec:MainResults}
This section provides our main contributions. Before stating our main result, let us introduce quantities that will drive the prediction risk of $\hat \beta$.  
Define for an absolute constant $c_0$,
 \begin{equation}\label{eq_kstar}
\rho= \|\beta^* \|_2 + \frac{4 \| \xi \|_2 }{\sqrt{r_{k^*}(\Sigma)}},\qquad \text{where}\qquad k^* = \inf \bigg\{ k \in \{1,\cdots, p \}:  \frac{ r_k (\Sigma) }{ \lambda_k (\Sigma)}   \geq  c_0  n \bigg\} \enspace,
\end{equation} 
with the convention that $\inf\emptyset=+\infty$. Also, for constants $\eta,\gamma >0$, let us define the following two complexity parameters:
\begin{gather}
\label{def:r^*}   r^*(\eta) =   \inf \bigg\{ r>0 :  \sum_{i=1}^p \lambda_i(\Sigma) \wedge r^2  \leq  \eta n r^2 \bigg\} \\
\label{def:bar_r}   \bar r (\gamma) =   \sup \bigg\{ r>0 :  \sum_{i=1}^p \lambda_i(\Sigma)\rho^2 \wedge r^2  \leq  \gamma  \| \xi \|_2^2 \bigg\} \enspace.
\end{gather}
We are now in position to state our main theorem. 
\begin{Theorem} \label{theorem_loss_quadra} 
Assume $k^*\leq cn$, for $c >0$ an absolute constant. \\
For $\eta$ small enough, there exist absolute constants $c_1,c_2,c_3$  such that with probability larger than $1-c_1e^{-c_2n}$, the estimator $\hat \beta$ defined in Equation~\eqref{def_interpolation} satisfies
	\begin{gather*}
	\|\hat{\beta} - \beta^*\|_2 \leq \rho\,\quad \mbox{and} \quad 
	\|\Sigma^{1/2} (\hat{\beta} - \beta^*)\|_2 \leq  \rho r^*(\eta) \vee c_3 \frac{\| \xi \|_2}{\sqrt n}	 \enspace.
	\end{gather*}
	Moreover, let us assume that $X_i | \xi$ are i.i.d. $\cN(0,\Sigma)$ (recall that $X_i^T$ is the $i$-th row of $\bX$). 
	There exist absolute constants $\gamma>0$, $c_1,c_2,c_3$ such that with probability larger than $1-c_1e^{-c_2n}$, the estimator $\hat \beta$ defined in Equation~\eqref{def_interpolation} satisfies
	$$
	\|\Sigma^{1/2} (\hat{\beta} - \beta^*)\|_2\geq \bar r(\gamma) \wedge c_3\frac{\| \xi \|_2}{\sqrt n}\enspace.
	$$
\end{Theorem}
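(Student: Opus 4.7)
The central observation is that interpolation forces $\bX(\hat{\beta} - \beta^*) = \xi$, so setting $v = \hat{\beta} - \beta^*$ one has the identity
\[
\frac{1}{n}\|\bX v\|_2^2 = \frac{\|\xi\|_2^2}{n}.
\]
All three conclusions of the theorem will follow from (i) \emph{localizing} $v$ in an $\ell_2$-ball of radius $\rho$, and then (ii) comparing, uniformly on that ball, the empirical quadratic form $\tfrac{1}{n}\|\bX v\|_2^2$ with its population counterpart $\|\Sigma^{1/2}v\|_2^2$.

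For the localization $\|\hat{\beta}-\beta^*\|_2 \leq \rho$, I would use $\hat{\beta} = \bX^\dagger \bY$ to obtain the $\ell_2$-orthogonal decomposition $\hat{\beta}-\beta^* = \bX^\dagger\xi - \Pi_{\ker\bX}\beta^*$. The kernel projection has norm at most $\|\beta^*\|_2$, and $\|\bX^\dagger\xi\|_2 \leq \|\xi\|_2/\sigma_{\min}(\bX)$. The assumption $k^*\leq cn$ is tailored so that a standard block-wise Gaussian matrix concentration argument --- splitting off the top $k^*-1$ spectral directions of $\Sigma$ from the tail, which carries the bulk $r_{k^*}(\Sigma)$ of the covariance mass --- yields $\sigma_{\min}(\bX) \geq \tfrac{1}{4}\sqrt{r_{k^*}(\Sigma)}$ with probability $1-c_1 e^{-c_2 n}$. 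The bound then closes as $\|\hat{\beta}-\beta^*\|_2 \leq \|\beta^*\|_2 + 4\|\xi\|_2/\sqrt{r_{k^*}(\Sigma)} = \rho$.

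For the prediction risk I would establish two matching uniform quadratic deviation inequalities over $v\in B(\rho)$: a lower isometry
\[
\tfrac{1}{n}\|\bX v\|_2^2 \geq \tfrac{1}{2}\|\Sigma^{1/2}v\|_2^2 \quad\text{whenever}\quad \|\Sigma^{1/2}v\|_2 \geq \rho\, r^*(\eta),
\]
and a one-sided upper isometry
\[
\tfrac{1}{n}\|\bX v\|_2^2 \leq 2\|\Sigma^{1/2}v\|_2^2 + \gamma\|\xi\|_2^2/n \quad\text{whenever}\quad \|\Sigma^{1/2}v\|_2 \leq \bar r(\gamma),
\]
each holding with probability $1-c_1 e^{-c_2 n}$. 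Both should come from a generic chaining argument applied to the Gaussian quadratic process $v\mapsto \tfrac{1}{n}\|\bX v\|_2^2 - \|\Sigma^{1/2}v\|_2^2$ indexed by the intersection $B(\rho)\cap B_\Sigma(r)$: in the eigenbasis of $\Sigma$ the Gaussian width of this intersection equals, up to constants, $\sqrt{\sum_i \lambda_i(\Sigma)\rho^2\wedge r^2}$, and the definitions of $r^*(\eta)$ and $\bar r(\gamma)$ are exactly the thresholds at which this width balances $\sqrt{n}\,r$ and $\sqrt{\gamma}\,\|\xi\|_2$, respectively.

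Plugged into the identity, the lower isometry yields the dichotomy $\|\Sigma^{1/2}v\|_2 \leq \rho\, r^*(\eta)$ or $\|\Sigma^{1/2}v\|_2 \leq c_3\|\xi\|_2/\sqrt{n}$, proving the upper prediction bound. For the lower bound, the hypothesis that $X_i\mid\xi$ are i.i.d.\ $\cN(0,\Sigma)$ allows the chaining to be performed at fixed $\xi$; a hypothetical violation of $\|\Sigma^{1/2}v\|_2 \geq \bar r(\gamma)\wedge c_3\|\xi\|_2/\sqrt n$ would then force, via the upper isometry, $\tfrac{1}{n}\|\bX v\|_2^2 \leq (2c_3^2+\gamma)\|\xi\|_2^2/n < \|\xi\|_2^2/n$ for suitably small absolute constants $c_3,\gamma$, contradicting the identity. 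The main obstacle throughout is these two quadratic-process inequalities: both the $\ell_2$-radius $\rho$ and the $\Sigma$-radius $r$ enter the covering numbers, the coordinate-wise truncation $\lambda_i(\Sigma)\rho^2\wedge r^2$ must be produced honestly from the spectral decomposition of $\Sigma$, and obtaining the exponential deviation $e^{-c_2 n}$ --- which is the key improvement over the $e^{-c_2 \mathrm{Tr}(\Sigma)}$ of~\cite{bartlett2019benign} --- forces one to use Hanson--Wright / Talagrand-type exponential bounds for Gaussian chaos rather than variance-based arguments.
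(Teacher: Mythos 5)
Your proposal is correct and follows essentially the same route as the paper: localization via $\hat\beta=\bX^{\dagger}\bY$ with $\sigma_n(\bX)\gtrsim\sqrt{r_{k^*}(\Sigma)}$ obtained by dropping the top $k^*-1$ spectral directions, then a generic-chaining (Dirksen-type) deviation bound for the Gaussian quadratic process over $B(\rho)\cap B_{\Sigma^{-1/2}}(\rho)$-type intersections whose Gaussian width is $\bigl(\sum_i\lambda_i(\Sigma)\rho^2\wedge r^2\bigr)^{1/2}$, combined with the interpolation identity. Your lower-isometry dichotomy at radius $\rho r^*(\eta)$ and the contradiction at radius $\bar r(\gamma)\wedge c_3\|\xi\|_2/\sqrt n$ conditionally on $\xi$ are exactly the arguments in the paper's Theorems on the upper and lower prediction bounds.
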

Theorem~\ref{theorem_loss_quadra} is proved in Section~\ref{sec:ProofThmPredQuad}.
The estimation bound $\rho$ does not converge to $0$. 
This is not surprising in high dimension without sparsity assumption. 
However, it is interesting to see that it may decrease, up to a certain threshold, with the dimension $p$. 
In particular, when the signal to noise ratio $\|\beta^*\|^2/\| \xi \|_2^2$ is larger that the threshold $1/r_{k^*}(\Sigma)$, $\|\hat \beta - \beta^*\|_2$ is at most of order $\|\beta^*\|_2$.

As stressed before, the upper bound in~Theorem~\ref{theorem_loss_quadra} holds without assumption on the $\xi_i$'s. They can be deterministic or even depend on $\bX$. 
This is a major difference with previous results in the literature such as~\cite{bartlett2019benign,bunea2020interpolation} where this noise was always Gaussian and independent from $\bX$. 
Here, the results can be applied to the following example, where $\bY$ is itself the output of a prediction algorithm, that is, when $\bY=f(\bX)=(f_1(\bX_1),\ldots,f_n(\bX_n))^T$. 
In this case, the upper bounds becomes
 $$
 \frac{\| \xi \|^2_2}{n}=\frac{\|f(\bX)-\bX\beta^*\|_2^2}n = \frac{1}{n} \sum_{i=1}^n \big( \langle \bX_i, \beta^* \rangle  - f_i (\bX_i) \big)^2 \enspace.
 $$
This error measures how far the initial prediction is from the linear model.\\

To discuss the upper prediction bounds, it is useful to give the following corollary. It shows a phase transition in the prediction rates when the signal to noise ratio $\text{SNR}=\|\beta^*\|^2/\| \xi \|^2$ becomes larger than the threshold $t=1/r_{cn}(\Sigma)$.

\begin{Corollary}\label{cor:CompBartlett}
Grant the assumptions and notations of Theorem~\ref{theorem_loss_quadra}. The estimator $\hat \beta$ defined in Equation~\eqref{def_interpolation} satisfies, with probability larger than $1-c_1e^{-c_2n}$,
		\begin{gather*}
		\|\Sigma^{1/2} (\hat{\beta} - \beta^*)\|_2^2  \lesssim \|\beta^* \|_2^2 \frac{r_{cn}(\Sigma)}{n} \vee \frac{\| \xi \|_2^2}{n} \enspace.
		\end{gather*}
\end{Corollary}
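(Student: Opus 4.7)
The corollary is a direct consequence of the upper bound in Theorem~\ref{theorem_loss_quadra}, once one unpacks the two auxiliary quantities $r^*(\eta)$ and $\rho$ in terms of $r_{cn}(\Sigma)$, $\|\beta^*\|_2$ and $\|\xi\|_2$. My plan therefore amounts to doing this bookkeeping, with no new probabilistic ingredient.

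First, I would start from the high-probability bound
\[
\|\Sigma^{1/2}(\hat\beta-\beta^*)\|_2^2 \leq \rho^2\,r^*(\eta)^2 \,\vee\, c_3^2\,\frac{\|\xi\|_2^2}{n},
\]
obtained by squaring the conclusion of Theorem~\ref{theorem_loss_quadra}. The second term already has the desired form, so the whole task is to bound $\rho^2\,r^*(\eta)^2$ by $\|\beta^*\|_2^2\,r_{cn}(\Sigma)/n + \|\xi\|_2^2/n$ (and then convert the sum into a max, losing only a factor of $2$).

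Next I would estimate $r^*(\eta)$. From the definition \eqref{def:r^*}, for any integer $k\in\{1,\ldots,p\}$ one has the pointwise inequality
\[
\sum_{i=1}^p \lambda_i(\Sigma)\wedge r^2 \leq k r^2 + r_{k+1}(\Sigma) \leq k r^2 + r_{k}(\Sigma).
\]
Choosing $k=\lfloor \eta n/2\rfloor$ shows that every $r$ with $r^2 \geq 2 r_k(\Sigma)/(\eta n)$ satisfies the defining inequality of $r^*(\eta)$, hence
\[
r^*(\eta)^2 \leq \frac{2}{\eta n}\,r_{\lfloor \eta n/2\rfloor}(\Sigma) \;\lesssim\; \frac{r_{cn}(\Sigma)}{n},
\]
for $c=\eta/2$ (which I then shrink, if needed, so that the same constant $c$ also bounds $k^*/n$).

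Finally I would control $\rho$. From \eqref{eq_kstar} and $(a+b)^2\leq 2a^2+2b^2$,
\[
\rho^2 \leq 2\|\beta^*\|_2^2 + 32\,\frac{\|\xi\|_2^2}{r_{k^*}(\Sigma)}.
\]
Since $k^*\leq cn$ by the standing assumption of Theorem~\ref{theorem_loss_quadra}, monotonicity of $k\mapsto r_k(\Sigma)$ gives $r_{k^*}(\Sigma)\geq r_{cn}(\Sigma)$. Combining with the bound on $r^*(\eta)^2$,
\[
\rho^2\, r^*(\eta)^2 \;\lesssim\; \|\beta^*\|_2^2\,\frac{r_{cn}(\Sigma)}{n} + \frac{\|\xi\|_2^2}{r_{cn}(\Sigma)}\cdot \frac{r_{cn}(\Sigma)}{n} \;=\; \|\beta^*\|_2^2\,\frac{r_{cn}(\Sigma)}{n} + \frac{\|\xi\|_2^2}{n}.
\]
Substituting into the first display and using $a+b\leq 2(a\vee b)$ yields exactly the stated bound. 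There is no real obstacle here; the only thing to be careful about is to unify the various constants $c$ (from the definition of $k^*$, from the assumption $k^*\leq cn$, and from the choice $k\sim \eta n/2$) into a single $c$ appearing in $r_{cn}(\Sigma)$, which I would do at the end by taking the minimum.
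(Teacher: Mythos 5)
Your proposal is correct and follows essentially the same route as the paper: bound $r^*(\eta)^2\lesssim r_{cn}(\Sigma)/n$ by splitting $\sum_i\lambda_i(\Sigma)\wedge r^2\le kr^2+r_k(\Sigma)$ at $k=\lfloor\eta n/2\rfloor$, then use $\rho^2\lesssim\|\beta^*\|_2^2+\|\xi\|_2^2/r_{k^*}(\Sigma)$ together with $r_{k^*}(\Sigma)\ge r_{cn}(\Sigma)$ and plug into Theorem~\ref{theorem_loss_quadra}. The only cosmetic difference is that you carry a sum where the paper carries a maximum, and your explicit remark about unifying the constants $c$ addresses an imprecision the paper itself leaves implicit.
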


Corollary~\ref{cor:CompBartlett} is proved in Section~\ref{sec:ProofCorollaries}. 
It can be used to compare our results with \cite{bartlett2019benign}.
\begin{enumerate}
 \item  If the signal to noise ratio is large enough, $\|\beta^* \|_2^2/\| \xi \|_2^2\geq 1/r_{cn}(\Sigma)$ the bounds in \cite{bartlett2019benign} are always larger than ours. Indeed, for large SNR, our bounds are of order $\|\beta^* \|_2^2 r_{cn}(\Sigma)/n$ while theirs are of order $\|\beta^* \|_2^2 \sqrt{\textnormal{Tr}(\Sigma)/n} $. 
The improvement can even be exponential improvements as shown in the example below. 
  \item For small signal to noise ratios, $\text{SNR}<t$, our prediction rates are of order $\frac{\| \xi \|_2}{\sqrt n}$. This bound holds without any assumption on the noise $\xi$. 
  Contrary to~\cite{bartlett2019benign}, this rate does not converge to $0$ as $n \rightarrow \infty$ when $\xi \sim \cN(0,\sigma^2\bI)$ is independent from $\bX$. 
  However, in this case, $\| \xi \|_2^2 / n \asymp \sigma^2$ matches the optimal rate holding with probability larger than $1-c_1\exp(-c_2n)$ (see \cite[Theorem A' ]{lecue2013learning}). 
\end{enumerate}
To illustrate their upper bounds, \cite{bartlett2019benign} provide several examples of ``benign matrices" where the different quantities of interest in Theorem~\ref{theorem_loss_quadra} can easily be computed.
We compute the quantities appearing in one of these examples now.\\
Assume that there exist $\epsilon>0$ (small), $c>0$ and $\tau$ such that $p=cn$, $\tau\log(1/\epsilon) \leq n$ and
\[
\forall k\geq 1,\qquad \lambda_k(\Sigma)=e^{-k/\tau}+\epsilon \enspace.
\]
In this case, for any $k$,
\begin{gather*}
\frac{r_k}{\lambda_k} \asymp \frac{(p-k)\epsilon+ \tau \exp(-k/ \tau)}{e^{-k/\tau}+\epsilon}\enspace,\\
\end{gather*}
Therefore, for $k=\tau\log(1/\epsilon)$,
\begin{gather*}
 \frac{r_k}{\lambda_k}\gtrsim\frac{p\epsilon+\tau \epsilon}{\epsilon} \gtrsim  n,\qquad \text{so}\qquad k^* \leq \tau\log(1/\epsilon)\leq n\enspace.
\end{gather*}
Moreover, $r_{cn}(\Sigma) \lesssim p\epsilon + \tau \exp(-cn /\tau)$  and Corollary~\ref{cor:CompBartlett} shows in this example that with probability larger than $1-c_1e^{-c_2n}$, 
\[
\|\Sigma^{1/2} (\hat{\beta} - \beta^*)\|_2^2  \lesssim \|\beta^* \|_2^2 \frac{p \epsilon+\tau \exp(-n / \tau)}{n}  \vee \frac{\| \xi \|_2^2}{n}\enspace.
\]
Our rates of convergence in this example can therefore be as fast as $\epsilon\vee \tau \exp(-n/\tau )/n$, while \cite[Theorem 6]{bartlett2019benign} gives in this setting a rate $\sqrt{ \epsilon \vee \tau \exp(-1/\tau)/  n}$ leading to a potential exponential improvement (a multiplication by $e^{-c n}$ of the rates) for small $\epsilon$. \\

Let us turn to the study of lower bound in the large noise regime. 
\begin{Corollary}\label{cor:lower_bound}
	Grant the assumptions and notations of Theorem~\ref{theorem_loss_quadra}, with $\gamma$ chosen such that the conclusion of the Theorem holds.
	Assume that $X_i | \xi \sim \cN(0,\Sigma)$ and are independent conditionally on $\xi$. 
	If the signal to noise ratio is small, $\|\beta^* \|_2^2/\| \xi \|_2^2\leq 1/r_{k^*}(\Sigma)$, then, the estimator $\hat \beta$ defined in Equation~\eqref{def_interpolation} satisfies, with probability larger than $1-c_1e^{-c_2n}$,
		$$
		 \|\Sigma^{1/2} (\hat{\beta} - \beta^*)\|_2^2\geq c_3\frac{\| \xi \|_2^2}{ n \wedge \bar k } \enspace,
		$$
		where
	 \begin{equation}\label{eq_kbar}
	\bar k = \inf \bigg\{ k \geq k^* :  \sum_{i=k}^p \lambda_i (\Sigma)   \leq  \frac{\gamma}{2}   \sum_{i=k^*}^p \lambda_i (\Sigma)\bigg\} \enspace,
	\end{equation} 
	with the convention that $\bar{k}=p+1$ if the set is empty.
\end{Corollary}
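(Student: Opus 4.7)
The plan is to deduce the corollary from the lower bound in Theorem~\ref{theorem_loss_quadra}, which gives $\|\Sigma^{1/2}(\hat{\beta}-\beta^*)\|_2 \geq \bar r(\gamma) \wedge c_3\|\xi\|_2/\sqrt{n}$ with probability at least $1-c_1 e^{-c_2 n}$. Since the second term of the minimum already has the right form, the task reduces to showing $\bar r(\gamma) \gtrsim \|\xi\|_2/\sqrt{\bar k}$; squaring and combining then delivers the claimed bound up to absolute constants.

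First I would exploit the low signal-to-noise hypothesis $\|\beta^*\|_2^2 r_{k^*}(\Sigma) \leq \|\xi\|_2^2$. Substituting into the definition of $\rho$ in~\eqref{eq_kstar} gives $\rho \leq 5\|\xi\|_2/\sqrt{r_{k^*}(\Sigma)}$, and hence the key estimate $\rho^2 r_{k^*}(\Sigma) \leq 25\|\xi\|_2^2$.

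Next I would exhibit a trial radius satisfying the defining constraint of $\bar r(\gamma)$ in~\eqref{def:bar_r}. Splitting the sum at the index $\bar k$ and using the pointwise inequalities $\min(a,b)\leq b$ for $i\leq \bar k$ and $\min(a,b)\leq a$ for $i>\bar k$ yields
\[\sum_{i=1}^p \lambda_i(\Sigma)\rho^2 \wedge r^2 \;\leq\; \bar k\, r^2 + \rho^2 r_{\bar k+1}(\Sigma).\]
By the definition of $\bar k$ in~\eqref{eq_kbar} (with the convention that the tail vanishes when $\bar k = p+1$), $r_{\bar k+1}(\Sigma) \leq r_{\bar k}(\Sigma) \leq (\gamma/2) r_{k^*}(\Sigma)$; combined with the estimate on $\rho^2 r_{k^*}(\Sigma)$, this gives $\rho^2 r_{\bar k+1}(\Sigma) \lesssim \gamma \|\xi\|_2^2$. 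Choosing $r^2 = c'\|\xi\|_2^2/\bar k$ for a small absolute constant $c'$ makes $\bar k\, r^2 = c'\|\xi\|_2^2$; tuning the absolute constants so that the total stays below $\gamma \|\xi\|_2^2$, one concludes $\bar r(\gamma) \geq r$, i.e., $\bar r(\gamma) \geq c''\|\xi\|_2/\sqrt{\bar k}$ for some absolute $c''$.

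Inserting this into the theorem and squaring completes the proof, with the final constant $c_3$ of the corollary absorbing both the constants $c''$, $c_3$ of the theorem and the various numerical factors involved in $\rho$ and $\bar k$. The main obstacle is the careful bookkeeping of these constants: the trivial split above already produces $\rho^2 r_{\bar k+1}(\Sigma) \leq (25\gamma/2)\|\xi\|_2^2$, which alone exceeds the budget $\gamma\|\xi\|_2^2$. To save the extra factor, one either refines the trivial split by taking it at the exact threshold $k(r) = \max\{i : \lambda_i(\Sigma)\rho^2 \geq r^2\}$ rather than at $\bar k$, or reads the factor $\gamma/2$ in~\eqref{eq_kbar} as a convenient shorthand for a sufficiently small multiple of $\gamma$ depending on the universal constant $4$ appearing in the definition of $\rho$.
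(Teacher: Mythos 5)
Your proposal follows essentially the same route as the paper's own proof: use the low-SNR hypothesis to get $\rho\lesssim \|\xi\|_2/\sqrt{r_{k^*}(\Sigma)}$, split the sum defining $\bar r(\gamma)$ at the index $\bar k$ to obtain $\sum_{i=1}^p\lambda_i(\Sigma)\rho^2\wedge r^2\lesssim \bar k\,r^2+\gamma\|\xi\|_2^2$, deduce $\bar r(\gamma)^2\gtrsim \|\xi\|_2^2/\bar k$, and conclude from the lower bound of Theorem~\ref{theorem_loss_quadra}. The constant overshoot you flag (the factor coming from $\rho^2\leq 25\|\xi\|_2^2/r_{k^*}(\Sigma)$ exceeding the budget $\gamma\|\xi\|_2^2$) is present in the paper's proof as well, where it is simply absorbed into the $\lesssim$/$\gtrsim$ notation, so your remark that the $\gamma/2$ in \eqref{eq_kbar} should be read as a sufficiently small multiple of $\gamma$ is an accurate account of what the paper's argument implicitly does rather than a gap in your own.
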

Corollary~\ref{cor:lower_bound} is proved in Section~\ref{sec:ProofCorollaries}.
Note that when $p = cn$, we have $\bar k \leq cn$. In this case, in the large noise regime,
$$
\|\Sigma^{1/2} (\hat{\beta} - \beta^*)\|_2^2 \gtrsim  \frac{\| \xi \|_2^2}{n}	 \enspace,
$$
with probability larger than $1-c_1\exp(-c_2n)$, which matches the upper bound. \\

Let us give an example of spectrum where $p$ might be much larger than $n$ while $\bar k$ remains smaller than $n$. Let $k_1$, $c$, $k_2=k_1+cn+1$, $\varepsilon_1,\varepsilon_2$ real numbers and assume that
$$
\lambda_i = \left\{
\begin{array}{ll}
1 & \mbox{if } i \leqslant k_1-1\enspace,\\
\varepsilon_1 & \mbox{if } i \in \{k_1 \cdots, k_1-1 \} \enspace,\\
\varepsilon_2 & \mbox{if } i \geqslant k_2\enspace.
\end{array}
\right.
$$
In this case,
\begin{gather*}
 \frac{r_{k_1}}{\lambda_{k_1}} \geqslant cn + (p-k_1-cn) \frac{\varepsilon_2}{\varepsilon_1} \geq cn\enspace,\\
\end{gather*}
It follows that $k^*\leqslant k_1$. Finally 
$$
\frac{\sum_{i= k_2}^p  \lambda_i}{\sum_{i= k_1}^p  \lambda_i} \leqslant \frac{(p-k_2 +1) \varepsilon_2}{cn\varepsilon_1 + (p-k_2 +1)\varepsilon_2} \enspace.
$$ 
This ratio is smaller than $\gamma/2$ if 
$$
\varepsilon_2 \leq  \frac{\gamma}{2-\gamma}   \frac{cn}{p- k_2 +1} \varepsilon_1 \enspace.
$$
This proves that $\bar{k}\leqslant k_1+cn$. 
If $k_1\leq n$, we have therefore, in this example, if  $\|\beta^* \|^2_2 / \| \xi \|_2^2 \leq r_{cn}(\Sigma)$
$$
\|\Sigma^{1/2} (\hat{\beta} - \beta^*)\|_2^2 \gtrsim \frac{\| \xi \|_2^2}{n}\enspace.
$$

\section{Proofs of the main results}\label{sec:proofs}
The remaining of the paper is devoted to the proofs of the main results.
Section~\ref{sec:ProofEstBounds} (resp.~\ref{sec:ProofThmPredQuad}) shows the estimation bound (resp. the prediction bounds) in Theorem~\ref{theorem_loss_quadra}. 

\subsection{Proof of the estimation bound of Theorem~\ref{theorem_loss_quadra}}\label{sec:ProofEstBounds}
The following theorem establishes the bound on the estimation error in Theorem~\ref{theorem_loss_quadra}.
In the following section, this preliminary estimate will be used to ``localize" the analysis of the prediction risk of $\hat \beta$.
This approach is now classical in statistical learning, it has been applied successfully, for example, in~\cite{MR3431642, mendelson2014learning, mendelson2016upper, mendelson2017multiplier}.

\begin{Theorem} \label{theorem_control_reg_norm}
There exists $c>0$ such that, if $c_0\geq c$ in the definition of $k^*$, the estimator $\hat \beta$ defined in Equation~\eqref{def_interpolation} satisfies
\begin{equation}\label{eq_proba_reg}
\bP \left(  \|\hat \beta - \beta^* \|_2 \leq \| \beta^* \|_2 +\frac{4\| \xi \|_2 }{ \sqrt {r_{k*}(\Sigma)}}  \right) \geq 1 - 2\exp\big(-n\big) \enspace.
\end{equation}
\end{Theorem}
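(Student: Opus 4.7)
The first step is to write the minimum $\ell_2$ interpolator in closed form. Since $\bX$ almost surely has full row rank $n$, the matrix $\bX\bX^T$ is invertible, and the unique minimum norm solution to $\bX\beta=\bY$ is
\[
\hat\beta = \bX^T(\bX\bX^T)^{-1}\bY.
\]
Substituting $\bY=\bX\beta^*+\xi$ and writing $P=\bX^T(\bX\bX^T)^{-1}\bX$ for the orthogonal projector onto the row space of $\bX$, this gives
\[
\hat\beta-\beta^* = -(I-P)\beta^* + \bX^T(\bX\bX^T)^{-1}\xi.
\]
The triangle inequality and $\|(I-P)\beta^*\|_2\leq\|\beta^*\|_2$ then yield
\[
\|\hat\beta-\beta^*\|_2 \leq \|\beta^*\|_2 + \frac{\|\xi\|_2}{\sigma_{\min}(\bX)},
\]
since $\xi^T(\bX\bX^T)^{-1}\xi \leq \|\xi\|_2^2/\lambda_n(\bX\bX^T)$. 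The whole problem therefore reduces to proving the lower bound
\[
\sigma_{\min}(\bX)^2 = \lambda_n(\bX\bX^T) \geq \frac{r_{k^*}(\Sigma)}{16}
\]
with probability at least $1-2e^{-n}$.

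The second step is a spectral truncation. Write $X_i=\Sigma^{1/2}g_i$ with $g_i\sim\cN(0,I_p)$ i.i.d., so that $\bX=\bG\Sigma^{1/2}$ with $\bG\in\R^{n\times p}$ having i.i.d.\ standard Gaussian entries. Decompose $\Sigma=\Sigma_{<k^*}+\Sigma_{\geq k^*}$, where $\Sigma_{\geq k^*}$ gathers only the eigendirections with indices $\geq k^*$ (so its spectrum is $\lambda_{k^*}(\Sigma),\lambda_{k^*+1}(\Sigma),\ldots$). Since both summands of $\bX\bX^T=\bG\Sigma\bG^T=\bG\Sigma_{<k^*}\bG^T+\bG\Sigma_{\geq k^*}\bG^T$ are PSD,
\[
\lambda_n(\bX\bX^T) \geq \lambda_n\!\bigl(\bG\Sigma_{\geq k^*}\bG^T\bigr).
\]
The matrix $\bG\Sigma_{\geq k^*}\bG^T$ has expectation $r_{k^*}(\Sigma)\,I_n$, and the point of the definition of $k^*$ is that the tail $\Sigma_{\geq k^*}$ is ``effectively high dimensional'' ($r_{k^*}(\Sigma)/\lambda_{k^*}(\Sigma)\geq c_0 n$), which is exactly the regime in which $\bG\Sigma_{\geq k^*}\bG^T$ concentrates sharply around its mean.

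The third and main technical step is to quantify this concentration. A standard Gaussian concentration argument (via Gordon's inequality or the bounds used in \cite{bartlett2019benign,koltchinskii2017concentration}) yields, with probability at least $1-2e^{-n}$,
\[
\lambda_n\!\bigl(\bG\Sigma_{\geq k^*}\bG^T\bigr) \geq r_{k^*}(\Sigma) - C_1\sqrt{n\,\textnormal{Tr}(\Sigma_{\geq k^*}^2)} - C_2\,n\lambda_{k^*}(\Sigma).
\]
Using $\textnormal{Tr}(\Sigma_{\geq k^*}^2)\leq \lambda_{k^*}(\Sigma)\,r_{k^*}(\Sigma)$ and the definition $\lambda_{k^*}(\Sigma)\leq r_{k^*}(\Sigma)/(c_0 n)$, both correction terms are bounded by $r_{k^*}(\Sigma)\cdot O(1/\sqrt{c_0})$. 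Choosing the absolute constant $c_0$ in the definition of $k^*$ large enough makes these corrections at most $\tfrac{15}{16}r_{k^*}(\Sigma)$, so that $\lambda_n(\bG\Sigma_{\geq k^*}\bG^T)\geq r_{k^*}(\Sigma)/16$ on the desired event, completing the proof.

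The main obstacle is the third step: getting the right lower tail for $\lambda_n(\bG\Sigma_{\geq k^*}\bG^T)$ with the sharp probability $1-2e^{-n}$ rather than, e.g., $1-2e^{-cr_{k^*}(\Sigma)/\lambda_{k^*}(\Sigma)}$. This is precisely where the quantitative improvement over \cite{bartlett2019benign} advertised in the introduction shows up, and it relies on combining Gordon's min-max theorem with Gaussian Lipschitz concentration, exploiting that the defining inequality of $k^*$ ensures the ``stable rank'' of $\Sigma_{\geq k^*}$ dominates $n$.
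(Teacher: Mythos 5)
Your proposal is correct and follows essentially the same route as the paper: the same closed-form decomposition reducing the estimation bound to a lower bound on $\sigma_{\min}(\bX)$, the same spectral truncation keeping only the eigendirections with index at least $k^*$, and the same use of the condition $r_{k^*}(\Sigma)/\lambda_{k^*}(\Sigma)\geq c_0 n$ to absorb the deviation terms for $c_0$ large. The only difference is that the paper establishes the concentration of the smallest singular value from scratch, via an $\varepsilon$-net on $S^{n-1}$ combined with sub-exponential tail bounds for $\sum_{i\geq k^*}\lambda_i(\Sigma)g_i^2$, whereas you invoke the equivalent Koltchinskii--Lounici/Bartlett-type inequality; both give the same bound at the same $1-2e^{-n}$ confidence level.
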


\begin{proof}[Proof of Theorem~\ref{theorem_control_reg_norm}]
The proof starts with the following lemma.
\begin{Lemma} \label{lemma_start}
	The estimator $\hat \beta$ verifies 
	\begin{equation} \label{eq:lemma1}
	\| \hat \beta - \beta^* \|_2 \leq  \|\beta^* \|_2 +  \frac{\|  \xi \|_2}{\sigma_n( \bX  )} \enspace.
	\end{equation}
\end{Lemma}

\begin{proof}[Proof of Lemma~\ref{lemma_start}]
	Recall that 
	\[
	\hat \beta =\bX^{\dagger}\bY= \bX^{\dagger} \bX\beta^* + \bX^{\dagger}  \xi \enspace,
	\]
	where $\bX^{\dagger}$ denotes the Moore-Penrose pseudo-inverse of $\bX$. 
Therefore,
	\begin{equation} \label{eq_start}
	\| \hat \beta - \beta^* \|_2 = \| (\bX^{\dagger} \bX - I_p) \beta^* - \bX^{\dagger}  \xi \|_2 \leq \|\beta^*\|_2 + \|\bX^{\dagger} \xi \|_2\enspace,
	\end{equation} 
	where the last inequality follows from the triangular inequality and the fact that $\bX^{\dagger} \bX -I_p$ is the projection matrix onto the null-space of $\bX$. 
	Since $\| \bX^{\dagger}  \xi \|_2 \leq \| \bX^{\dagger}\|  \| \xi\|_2 $ it follows that
	$$
	\| \hat \beta - \beta^* \|_2  \leq \|\beta^*\|_2 +  \|  \xi \|_2 \|\bX^{\dagger} \| = \|\beta^*\|_2 + \frac{\|  \xi \|_2}{\sigma_n( \bX  )} \enspace,
	$$
	where the last identity holds because rank$(\bX) = n$ a.s. and thus $\| \bX^{\dagger} \| = \sigma_n^{-1}(\bX)$. 
\end{proof}

Lemma~\ref{lemma_start} provides a random bound on the estimation error of $\hat \beta$. 
To prove Theorem~\ref{theorem_control_reg_norm}, it remains to bound from below, with high probability, the $n$-th singular value $\sigma_n( \bX  )$ of $\bX$.
This control is obtained in the following lemma.
\begin{Lemma} \label{lemma_operator_norm}
	With probability larger than $
	1 -   2\exp (-n)$,
	if $c_0$ in the definition \eqref{eq_kstar} of $k^*$ is large enough, we have 
	\begin{equation*}
	\sigma_n(\bX) \geq  \frac{ \sqrt{r_{k^*}(\Sigma)}}{4}  \enspace.
	\end{equation*}
\end{Lemma}

\begin{proof}
The matrix $\bX^T$ is distributed as $\Sigma^{1/2} G$, where $G \in \bR^{p \times n}$ is a random matrix with i.i.d standard Gaussian variables, hence $\sigma_n( \bX ) = \sigma_n( \bX^T ) $ is distributed as $\sigma_n(\Sigma^{1/2} G ) $. 
Let $S^{n-1}$ denote the unit sphere in $\R^n$. 
From the Courant-Fischer-Weyl min-max principle, we have
	\begin{equation*}
	\sigma_n(\Sigma^{1/2} G  ) = \min_{x \in S^{n-1}} \|\Sigma^{1/2} Gx \|_2\enspace.
	\end{equation*}
	Let $\Lambda = \textnormal{diag}(\lambda_1(\Sigma),\cdots,\lambda_p(\Sigma))$.
	By the spectral theorem, there exists an orthogonal matrix $P$ such that $\Sigma^{1/2}=P\Lambda^{1/2}P^T$, so, for any $x\in S^{n-1}$, $\|\Sigma^{1/2}Gx\|_2^2=\| P \Lambda^{1/2}P^T Gx \|_2^2=\|\Lambda^{1/2}P^T Gx \|_2^2$.
Hence, by rotation invariance of Gaussian random vectors, $\|\Sigma^{1/2}Gx\|_2^2=\sum_{i=1}^p \lambda_i(\Sigma) g_i^2$, where $g_1, \cdots, g_p$ are i.i.d standard Gaussian random variables.
It follows that 
\begin{equation} \label{eq lower bound singular value}
\|  \Sigma^{1/2}G x \|_2^2 = \sum_{i=1}^p \lambda_i(\Sigma) g_i^2 \geq  \sum_{i=k^*}^p \lambda_i(\Sigma) g_i^2 = \| \Lambda_{k^*}^{1/2}  G x \|_2^2\enspace,
\end{equation}
where $k^*$ is defined in Theorem~\ref{theorem_loss_quadra} and $\Lambda_{k^*} = \textnormal{diag}(0,\cdots,0, \lambda_{k^*}(\Sigma), \cdots, \lambda_{^p}(\Sigma))$.

Let $\varepsilon \in (0,1)$ and $\mathcal N_{\varepsilon}$ be an $\varepsilon$-net of $S^{n-1}$.
A classical volume argument (see for example \cite[Corollary 4.2.13]{vershynin2018high}) shows that we can choose $\mathcal N_{\varepsilon}$ with $|\mathcal N_{\varepsilon}|\leq (3/\varepsilon)^n$. 
For any $x\in S^{n-1}$, there exists $y\in N_{\varepsilon}$ such that $\|x-y\|_2\leq \varepsilon$, so
\[
\|\Sigma^{1/2} Gx \|_2\geq \| \Lambda_{k^*}^{1/2}  G x \|_2\geq \|\Lambda_{k^*}^{1/2}  Gy \|_2-\varepsilon\|\Lambda_{k^*}^{1/2}  G \|\enspace.
\]
Hence,
\[
\sigma_n(\Sigma^{1/2} G  )\geq \sigma_n(\Lambda_{k^*}^{1/2}  G)\geqslant \min_{y\in N_{\varepsilon}} \|\Lambda_{k^*}^{1/2}  Gy \|_2-\varepsilon\|\Lambda_{k^*}^{1/2}  G \|\enspace.
\]	
Besides (see for example \cite[Lemma 4.4.1]{vershynin2018high})
\[
\|\Lambda_{k^*}^{1/2}  G \|\leq \frac1{1-\varepsilon}\max_{y\in N_{\varepsilon}} \|\Lambda_{k^*}^{1/2}  Gy \|_2\enspace,
\]
so 
\begin{equation}\label{eq spectral norm}
 \sigma_n(\Sigma^{1/2} G  )\geq\min_{y\in N_{\varepsilon}} \|\Lambda_{k^*}^{1/2}  Gy \|_2-\frac{\varepsilon}{1-\varepsilon}\max_{y\in N_{\varepsilon}} \|\Lambda_{k^*}^{1/2}  Gy \|_2\enspace.
\end{equation}

%

	Elementary computations show that, for any $i$, $\lambda_i(\Sigma) g_i^2$ is sub-exponential (see Definition~\ref{def_subexp} ) with parameters $(2\lambda_i(\Sigma),4\lambda_i(\Sigma))$.
	As these variables are independent, by Proposition~\ref{proposition_sum_subexp}, $\sum_{i=k^*}^p \lambda_i(\Sigma) g_i^2$ is sub-exponential with parameters $\big( 2 \big( \sum_{i=k^*}^p \lambda_i^2(\Sigma) \big)^{1/2},4 \lambda_{k^*}(\Sigma) \big)$. 
Therefore, by Proposition~\ref{proposition_tail_subexp}, with probability $1 - 2\exp (- t)$,
\begin{align}
\nonumber \big| \sum_{i=k^*}^p \lambda_i(\Sigma) g_i^2 - r_{k^*}(\Sigma) \big| &  \leq \max \bigg(\sqrt{8t  \sum_{i=k^*}^p \lambda_i^2(\Sigma)} , 8t \lambda_{k^*}(\Sigma) \bigg) \\
&\nonumber \leq \max \bigg(\sqrt{8\lambda_{k^*}(\Sigma) \  \sum_{i=k^*}^p \lambda_i(\Sigma)} , 8t \lambda_{k^*}(\Sigma) \bigg) \\
& \leq  \frac{r_k^*(\Sigma)}{2} + 12 t\lambda_{k^*}(\Sigma) \label{control subexp}      \enspace,
\end{align}
where we used the inequality $\sqrt{ab} \leq a/2 + b/2$, for any $a,b >0$ to get the last expression. 
A union bound shows therefore that, with probability $1-2\exp(-t+n\log(3/\epsilon))$,
	\[
	\frac{r_k^*(\Sigma)}{2} - 12 t\lambda_{k^*}(\Sigma)\leq\min_{y\in N_{\varepsilon}} \|\Lambda_{k^*}^{1/2}  Gy \|_2\leq \max_{y\in N_{\varepsilon}} \|\Lambda_{k^*}^{1/2}  Gy \|_2\leq \frac{3r_k^*(\Sigma)}{2} - 12 t\lambda_{k^*}(\Sigma)\enspace.
	\]
	Plugging this bound into \eqref{eq spectral norm} yields
	$$
		\sigma_n(\Sigma^{1/2} G x ) \geq  \bigg(1-\frac{\sqrt{3}\varepsilon}{1-\varepsilon}\bigg)\sqrt{\frac{r_{k^*}(\Sigma)}{2}} -  2\bigg(1+\frac{\sqrt{3}\varepsilon}{1-\varepsilon}\bigg)\sqrt{3 t\lambda_{k^*}(\Sigma)} \enspace.
	$$
	For $\varepsilon=1/4$, $t=n(1+\log(3/\varepsilon))$, this yields the result if $c_0$ in the definition \eqref{eq_kstar} of $k^*$ is large enough.
\end{proof}	
Theorem~\ref{theorem_control_reg_norm} then follows directly from Lemmas~\ref{lemma_start} and~\ref{lemma_operator_norm}.

\end{proof}

\subsection{Proof of the prediction bound in Theorem~\ref{theorem_loss_quadra} }\label{sec:ProofThmPredQuad}

We start this section with two Lemmas. Lemma~\ref{generic_deviation} enables to control the deviation of a quadratic process uniformly over a subset of $B_{\Sigma}(r)$, $r>0$. The main quantity driving this deviation is the Gaussian mean width that we introduce now. For any set $\mathcal B \subset \bR^p$ we define the Gaussian mean width of $\mathcal B$ as
\begin{equation} \label{def_gaussian_mean}
w(\mathcal B) = \bE   \sup_{\beta \in \mathcal B }  \langle \beta , g \rangle, \quad g \sim \cN(0,I_p) \enspace.
\end{equation}
The Gaussian mean width serves as a measure of effective dimension of the set $\mathcal B$ (see~\cite{amelunxen2014living} for equivalent formulations). \\
\begin{Lemma} \label{generic_deviation}
Let $r,\rho >0$ and $\delta \geq \exp(-n)$. Define $H_{r , \rho} = B(r) \cap B_{\Sigma^{-1/2}}(\rho)$,  where we recall that $B_{\Sigma^{-1/2}}(\rho)  = \{ \beta \in \bR^p : \sqrt{\beta^T \Sigma^{-1} \beta} \leq \rho  \}$ and $B(r) = \{  \beta \in \bR^p : \sqrt{\beta^T  \beta} \leq r \}$. There exists an absolute constant $c >0$ such that with probability larger than $1-\delta$
\begin{align} \label{empr_process}
 \sup_{\substack{ \| \beta - \beta^*\|_2 \leq \rho \\  \| \Sigma^{1/2} (\beta-\beta^*)\| _2 \leq r }}  \bigg| \frac{1}{n} \sum_{i=1}^n \langle X_i, \beta - \beta^* \rangle^2  & -  \bE \langle X_i, \beta- \beta^* \rangle^2 \bigg|\\
 & \leq c \left(  \frac{w^2 \big(  H_{r , \rho} \big)}{n} + r \sqrt \frac{w^2 \big(H_{r , \rho}  \big)}{n} + r^2 \sqrt{ \frac{\log(1/\delta)}{n}}   \right) \enspace,
\end{align}
\end{Lemma}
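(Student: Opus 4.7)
The strategy is to reduce the supremum to a centred quadratic form in i.i.d.\ standard Gaussians, then combine an expected-supremum bound obtained by generic chaining with a concentration step.

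\emph{Step 1: Reduction to an isotropic Gaussian process.} With the change of variables $u = \Sigma^{1/2}(\beta - \beta^*)$ and $Z_i = \Sigma^{-1/2} X_i$, the $Z_i$ are i.i.d.\ $\cN(0, I_p)$ on the range of $\Sigma^{1/2}$, $\inr{X_i, \beta - \beta^*} = \inr{Z_i, u}$, $\bE\inr{X_i, \beta-\beta^*}^2 = \|u\|_2^2$, and the two joint constraints translate exactly into $u \in H_{r,\rho}$. The quantity to bound becomes
$$
\sup_{u\in H_{r,\rho}} \bigl|\,u^T(\widehat\Sigma_n - I_p)\, u\,\bigr|,\qquad \widehat\Sigma_n = \frac{1}{n}\sum_{i=1}^n Z_iZ_i^T.
$$

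\emph{Step 2: Expected supremum via chaining.} By the Hanson-Wright inequality, the increments of the centred quadratic process $\Phi(u) := u^T(\widehat\Sigma_n - I_p)\, u$ satisfy a mixed-tail Bernstein estimate
$$
\bP\bigl(|\Phi(u)-\Phi(v)|>t\bigr) \le 2\exp\!\Bigl(-c\min\!\Bigl(\tfrac{n t^2}{(\|u\|_2+\|v\|_2)^2\,\|u-v\|_2^2},\ \tfrac{n t}{\|u-v\|_2^2}\Bigr)\Bigr).
$$
Talagrand's generic chaining for such mixed-tail processes (Dirksen, Krahmer-Mendelson-Rauhut) then yields
$$
\bE\sup_{u\in H_{r,\rho}}|\Phi(u)|\ \lesssim\ \frac{\gamma_2(H_{r,\rho},\|\cdot\|_2)^2}{n}\ +\ \frac{r\,\gamma_2(H_{r,\rho},\|\cdot\|_2)}{\sqrt n},
$$
using that $r$ upper bounds the Euclidean radius of $H_{r,\rho}$. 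The majorizing-measure theorem identifies $\gamma_2(H_{r,\rho}, \|\cdot\|_2) \asymp w(H_{r,\rho})$, producing the first two terms of the claim in expectation.

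\emph{Step 3: Concentration.} To pass from expectation to high probability I would apply a concentration inequality for the supremum of the empirical process. At each fixed $u$, $\inr{Z,u}^2$ has variance $\lesssim r^4$, so the sub-Gaussian scale driving the deviation is $r^2/\sqrt n$, matching the third term of the statement. Since the summands $\inr{Z_i,u}^2$ are unbounded, a direct application of Bousquet's inequality fails; the standard remedy is to restrict to the event $\{\max_i \|Z_i\|_2 \le c\sqrt{p+n}\}$, which has probability at least $1 - e^{-n} \ge 1 - \delta$, and apply Bousquet/Talagrand concentration to the resulting bounded class. The sub-exponential correction of order $\log(1/\delta)/n$ is then absorbed into the sub-Gaussian term $\sqrt{\log(1/\delta)/n}$ via the assumption $\delta \ge e^{-n}$, which forces $\log(1/\delta)/n \le 1$.

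\emph{Main obstacle.} The delicate point is Step~3: keeping the optimal prefactor $r^2$ in front of $\sqrt{\log(1/\delta)/n}$ (rather than a dimension-dependent quantity coming from the envelope of the class) while handling the unboundedness of $\inr{Z_i,u}^2$. The restriction $\delta \ge \exp(-n)$ in the statement is precisely what permits the sub-exponential corrections to be absorbed into the sub-Gaussian deviation, and is consistent with the Gaussian-tail probability of the truncation event used above.
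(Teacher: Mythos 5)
Your Steps 1 and 2 coincide with the paper's argument: the proof there performs the same reduction, writing $X_i\sim \Sigma^{1/2}g_i$ so that the supremum becomes $\sup_{u\in H_{r,\rho}}|\frac1n\sum_i\langle g_i,u\rangle^2-\E\langle g_i,u\rangle^2|$, and then controls this quadratic chaos by generic chaining, with $\gamma_2(H_{r,\rho},\|\cdot\|_2)$ replaced by $w(H_{r,\rho})$ via the majorizing measure theorem.

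The gap is in Step 3. The paper does not split the bound into an expectation estimate plus concentration of the supremum; it invokes \cite[Theorem 5.5]{dirksen2015tail} directly as a \emph{tail} bound, in which the deviation terms $K\log(1/\delta)/n+M\sqrt{\log(1/\delta)/n}$ are produced by the chaining itself with prefactors $K=M=r^2$, the squared $\psi_2$-radius of the class $\{\langle Z,u\rangle:u\in H_{r,\rho}\}$; the hypothesis $\delta\ge e^{-n}$ then absorbs the linear term into $r^2\sqrt{\log(1/\delta)/n}$. Your proposed substitute --- truncating on $\{\max_i\|Z_i\|_2\le c\sqrt{p+n}\}$ and applying Bousquet --- does not deliver this. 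After truncation the envelope of the class $u\mapsto\langle Z,u\rangle^2$ is of order $(p+n)r^2$ (since $u$ ranges over a Euclidean ball of radius $r$), and Bousquet's inequality contains a deviation term $b\,t/n$ with $b$ equal to that envelope; at $t=\log(1/\delta)\asymp n$ this is of order $(p+n)r^2$, far larger than $r^2$ in the regime $p\ge n$ studied in the paper. The variance computation $\mathrm{Var}(\langle Z,u\rangle^2)\lesssim r^4$ controls only the sub-Gaussian part of the deviation; the linear-in-$t$ part is governed by the sup-norm of the class (or, in Adamczak-type inequalities, by the $\psi_1$-norm of the maximal envelope, which still carries at least an extra $\log n$), not by $r^2$, so the claimed absorption is unjustified as written. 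The repair is immediate: use the tail form of the chaining result you already cite in Step 2 rather than only its expectation form.
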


\begin{proof}
	Let $\delta \geq \exp(-n)$, $\mathcal B \subset \bR^p$ and $r>0$. Let $(g_i)_{i=1}^n$ be $n$ i.i.d centered standard Gaussian vectors in $\bR^p$. From~\cite[Theorem 5.5]{dirksen2015tail}, there exists an absolute constant $c>0$ such that with probability larger than $1-\delta$
	\begin{multline} \label{gene_chaining_Dirk}
	\sup_{\beta  \in B(r) \cap \mathcal B   } \bigg| \frac{1}{n} \sum_{i=1}^n \langle g_i, \beta \rangle^2  - \bE \langle g_i, \beta \rangle^2 \bigg| \\
	\leq c \left(  \frac{w^2( B(r) \cap \mathcal B)}{n} + r \sqrt \frac{w^2(B(r) \cap \mathcal B)}{n} + r^2 \sqrt{ \frac{\log(1/\delta)}{n}}   \right)\enspace.
	\end{multline}
	The rest of the proof simply consists in rewritting the empirical process~\eqref{empr_process}. Since $X_i $ is distributed as $\Sigma^{1/2}g_i$, where $g_i \sim \cN(0,I_p)$ it follows that
	\begin{multline*}  
	\sup_{\substack{ \| \beta - \beta^*\|_2 \leq \rho \\  \| \Sigma^{1/2} (\beta-\beta^*)\| _2 =r }}  \bigg| \frac{1}{n} \sum_{i=1}^n \langle X_i, \beta - \beta^* \rangle^2  -  \bE \langle X_i, \beta- \beta^* \rangle^2 \bigg| \\
	= \sup_{\substack{ \| \Sigma^{-1/2}\beta \|_2 \leq \rho \\  \| \beta \| _2 =r }}  \bigg| \frac{1}{n} \sum_{i=1}^n \langle g_i, \beta  \rangle^2  - \bE \langle g_i, \beta \rangle^2 \bigg| \enspace.
	\end{multline*}
	Applying~\eqref{gene_chaining_Dirk} to the right-hand term concludes the proof.
\end{proof}

\begin{Lemma} \label{gaussian_mean_width}
	Let $r,\rho >0$. The following holds
	\begin{equation} \label{gmw_ellip}
	w \big(   B(r) \cap B_{\Sigma^{-1/2}}(\rho)  \big) \leq \sqrt{2}  \big(  \sum_{i=1}^p  \lambda_i(\Sigma) \rho^2 \wedge  r^2  \big)^{1/2}  \enspace.
	\end{equation}
\end{Lemma}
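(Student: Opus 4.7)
The plan is to work in the eigenbasis of $\Sigma$ and split the inner product $\langle \beta,g\rangle$ according to whether an eigendirection is "dominated" by the $r$-constraint or by the $\rho$-constraint. Diagonalize $\Sigma=\sum_{i=1}^p\lambda_i(\Sigma) e_ie_i^T$ and expand $\beta=\sum_i\beta_i e_i$. In this basis the constraints become $\sum_i\beta_i^2\leq r^2$ and $\sum_i\beta_i^2/\lambda_i(\Sigma)\leq \rho^2$, and, by rotational invariance of $g$, we can replace $g$ by a standard Gaussian vector in the same basis.

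For any subset $S\subseteq\{1,\dots,p\}$, I would split
\[
\langle\beta,g\rangle=\sum_{i\in S}\beta_ig_i+\sum_{i\in S^c}\beta_ig_i\enspace.
\]
By Cauchy--Schwarz applied separately to the two sums (using the $r$-constraint for $S$ and the $\rho$-constraint for $S^c$),
\[
\sum_{i\in S}\beta_ig_i\leq r\sqrt{\sum_{i\in S}g_i^2},\qquad \sum_{i\in S^c}\beta_ig_i=\sum_{i\in S^c}\frac{\beta_i}{\sqrt{\lambda_i(\Sigma)}}\sqrt{\lambda_i(\Sigma)}g_i\leq \rho\sqrt{\sum_{i\in S^c}\lambda_i(\Sigma) g_i^2}\enspace.
\]
Taking supremum over $\beta$ then expectation and using Jensen's inequality ($\E\sqrt{Z}\leq\sqrt{\E Z}$) yields, for every $S$,
\[
w\bigl(B(r)\cap B_{\Sigma^{-1/2}}(\rho)\bigr)\leq r\sqrt{|S|}+\rho\sqrt{\sum_{i\in S^c}\lambda_i(\Sigma)}\enspace.
\]

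The final step is to make the optimal choice $S=\{i:\lambda_i(\Sigma)\rho^2\geq r^2\}$. On $S$ we have $\lambda_i(\Sigma)\rho^2\wedge r^2=r^2$, so $r\sqrt{|S|}=\sqrt{\sum_{i\in S}(\lambda_i(\Sigma)\rho^2\wedge r^2)}$, and on $S^c$ we have $\lambda_i(\Sigma)\rho^2\wedge r^2=\lambda_i(\Sigma)\rho^2$, so $\rho\sqrt{\sum_{i\in S^c}\lambda_i(\Sigma)}=\sqrt{\sum_{i\in S^c}(\lambda_i(\Sigma)\rho^2\wedge r^2)}$. Summing the two pieces and applying $\sqrt{a}+\sqrt{b}\leq\sqrt{2(a+b)}$ gives the announced bound $\sqrt{2}\bigl(\sum_i\lambda_i(\Sigma)\rho^2\wedge r^2\bigr)^{1/2}$.

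There is no real obstacle here; the only subtle point is noticing that the two constraints act coordinatewise after diagonalization, and that one should choose $S$ so that the two halves of the bound telescope into the $\min$ expression. The inequality $\sqrt{a}+\sqrt{b}\leq \sqrt{2(a+b)}$ explains the constant $\sqrt{2}$ in the statement.
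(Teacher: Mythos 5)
Your proof is correct, and it takes a genuinely different route from the paper's. The paper first observes that the intersection of the two constraint sets is contained in a single ellipsoid, via $1/(a\wedge b)\leq 1/a+1/b$ applied coordinatewise, which gives $B(r)\cap B_{\Sigma^{-1/2}}(\rho)\subset\{\beta:\sum_i\beta_i^2/(\lambda_i(\Sigma)\rho^2\wedge r^2)\leq 2\}$; it then invokes the known formula for the Gaussian mean width of an ellipsoid (Talagrand, Proposition 2.5.1) to conclude. You instead never form the combined ellipsoid: you partition the coordinates into $S=\{i:\lambda_i(\Sigma)\rho^2\geq r^2\}$ and its complement, apply Cauchy--Schwarz with the binding constraint on each block, and finish with Jensen and $\sqrt{a}+\sqrt{b}\leq\sqrt{2(a+b)}$. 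Both arguments exploit the same underlying fact --- that after diagonalization the two constraints act coordinatewise and the minimum $\lambda_i(\Sigma)\rho^2\wedge r^2$ captures whichever is tighter --- and both land on the same constant $\sqrt{2}$ (in the paper it comes from the radius-$\sqrt 2$ dilation of the ellipsoid, in yours from the elementary inequality at the end). What your version buys is self-containedness: it needs no external result on ellipsoid widths, only Cauchy--Schwarz and Jensen. What the paper's version buys is brevity and a reusable containment statement. Either proof is perfectly adequate here.
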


\begin{proof}
From Equation~\eqref{def_gaussian_mean}, we have, for $g \sim \cN(0,I_p)$
\begin{align*}
w \big(  B(r) \cap B_{\Sigma^{-1/2}}(\rho)  \big)=   \bE \sup_{\beta  \in B(r) \cap B_{\Sigma^{-1/2}}(\rho)} \inr{g,t}  \enspace. 
\end{align*}
Moreover 
\begin{align*}
B(r) \cap B_{\Sigma^{-1/2}}(\rho)  &= \{  \beta \in \bR^p : \|\beta\|_2 \leq r,\|\Sigma^{-1/2} \beta \|_2 \leq  \rho \} \\
& = \bigg\{ \beta \in \R^p:  \sum_{i=1}^p\frac{\beta _i^2}{\lambda_i(\Sigma)\rho^2}\leq 1,\ \sum_{i=1}^p\frac{\beta_i^2}{r^2} \leq 1 \bigg\}\\
&\subset\bigg\{ \beta \in \R^p: \sum_{i=1}^p\frac{\beta_i^2}{\lambda_i(\Sigma)\rho^2\wedge r^2}\leq 2 \bigg\}\enspace.
\end{align*}
The Gaussian mean-width of an ellipsoid is given by~\cite[Proposition 2.5.1]{talagrand2014upper} and it follows that
\begin{equation} \label{gmw_ellip}
w \big(  B(r) \cap B_{\Sigma^{-1/2}}(\rho)  \big) \leq \sqrt{2}  \big(  \sum_{i=1}^p  \lambda_i(\Sigma) \rho^2 \wedge  r^2  \big)^{1/2}  \enspace.
\end{equation}
\end{proof}

\begin{Theorem} \label{lower_bound}
	For $\gamma >0 $, let us define  
	\begin{align*}
	 \bar r(\gamma) = \sup \bigg \{r >0 :  \sum_{i=1}^p  \lambda_i(\Sigma) \rho^2 \wedge  r^2 \leq \gamma \| \xi \|_2^2 \bigg\}  \enspace. 
	\end{align*}
	Assume that $X_i | \xi \sim \cN(0,\Sigma)$.   
	There exist $c_1,c_2,c_3 >0$ such that if $\gamma$ is small enough, then with probability larger than $1-c_1\exp(-cn_2) $
	$$
\bar r^2(\gamma) \wedge  c_3 \frac{\| \xi \|_2^2}{n}  \leq 	\| \Sigma^{1/2} (\hat \beta - \beta^* ) \|_2^2  \enspace.
	$$
\end{Theorem}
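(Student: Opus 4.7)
The proof rests on the interpolation identity
\[
\frac{1}{n}\sum_{i=1}^n\langle X_i,\hat\beta-\beta^*\rangle^2 \;=\; \frac{\|\xi\|_2^2}{n},
\]
so bounding the one-sided deviation $\frac{1}{n}\sum_i\langle X_i,\hat\beta-\beta^*\rangle^2 - \|\Sigma^{1/2}(\hat\beta-\beta^*)\|_2^2$ by $\tfrac{1}{2}\|\xi\|_2^2/n$ would immediately give a prediction lower bound of order $\|\xi\|_2^2/n$. Since Lemma~\ref{generic_deviation} controls this deviation uniformly over $B(\rho)\cap B_{\Sigma^{1/2}}(r)$ with a bound that grows with $r$, this strategy only closes up to the threshold $r_0:=\bar r(\gamma)\wedge c_3^{1/2}\|\xi\|_2/\sqrt n$. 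A dichotomy will then deliver the claimed lower bound.

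\textbf{Main steps.} First, I condition on $\xi$; then $X_1,\dots,X_n$ are i.i.d.\ $\cN(0,\Sigma)$ and both $\rho$ and $\bar r(\gamma)$ are deterministic. Theorem~\ref{theorem_control_reg_norm} yields, with probability at least $1-2e^{-n}$, the Euclidean localization $\|\hat\beta-\beta^*\|_2\le\rho$. Next, apply Lemma~\ref{generic_deviation} with $r=r_0$ and $\delta=e^{-n}$: with probability at least $1-e^{-n}$,
\[
\sup_{\substack{\|\beta-\beta^*\|_2\le\rho \\ \|\Sigma^{1/2}(\beta-\beta^*)\|_2\le r_0}}\bigl|\tfrac{1}{n}\textstyle\sum_i\langle X_i,\beta-\beta^*\rangle^2-\|\Sigma^{1/2}(\beta-\beta^*)\|_2^2\bigr|\le c\Bigl(\tfrac{w^2(H_{r_0,\rho})}{n}+r_0\sqrt{\tfrac{w^2(H_{r_0,\rho})}{n}}+r_0^2\Bigr).
\]
By Lemma~\ref{gaussian_mean_width} and the very definition of $\bar r(\gamma)$, we have $w^2(H_{r_0,\rho})\le 2\sum_i \lambda_i(\Sigma)\rho^2\wedge r_0^2\le 2\gamma\|\xi\|_2^2$. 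Plugging this in and using $r_0^2\le c_3\|\xi\|_2^2/n$, the right-hand side is at most
\[
c\bigl(2\gamma+\sqrt{2\gamma c_3}+c_3\bigr)\frac{\|\xi\|_2^2}{n},
\]
which is $\le \tfrac{1}{2}\|\xi\|_2^2/n$ provided $\gamma$ and $c_3$ are chosen small enough.

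\textbf{Dichotomy and conclusion.} On the intersection of the two events above (total probability $\ge 1-3e^{-n}$), either $\|\Sigma^{1/2}(\hat\beta-\beta^*)\|_2\ge r_0$, in which case the claimed bound $\|\Sigma^{1/2}(\hat\beta-\beta^*)\|_2^2\ge \bar r^2(\gamma)\wedge c_3\|\xi\|_2^2/n$ holds trivially; or $\|\Sigma^{1/2}(\hat\beta-\beta^*)\|_2<r_0$, in which case $\hat\beta$ lies in the set over which the supremum above is taken. The deviation bound applied at $\beta=\hat\beta$ combined with the interpolation identity then yields
\[
\|\Sigma^{1/2}(\hat\beta-\beta^*)\|_2^2\;\ge\;\frac{\|\xi\|_2^2}{n}-\frac{1}{2}\frac{\|\xi\|_2^2}{n}\;=\;\frac{\|\xi\|_2^2}{2n}\;\ge\; c_3\frac{\|\xi\|_2^2}{n}\;\ge\;r_0^2
\]
(for $c_3\le 1/2$), so the bound holds in this case as well. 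Since the conditional probability bound does not depend on $\xi$, integrating over $\xi$ gives the unconditional statement.

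\textbf{Main obstacle.} The main technical subtlety is that $\hat\beta$ depends on the $X_i$, whereas Lemma~\ref{generic_deviation} only bounds the deviation uniformly over a deterministic set. The fix is to ``localize'' $\hat\beta$ in two ways: Theorem~\ref{theorem_control_reg_norm} provides the Euclidean localization $\|\hat\beta-\beta^*\|_2\le\rho$, while the $\Sigma^{1/2}$-localization is enforced by the dichotomy above. Making $\rho$ and $\bar r(\gamma)$ deterministic (so that the set $H_{r_0,\rho}$ is deterministic) is precisely why one conditions on $\xi$, and this is the step that requires the hypothesis $X_i\mid\xi\sim\cN(0,\Sigma)$. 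The last delicate point is the calibration of the constants $\gamma$ and $c_3$ so that the three-term deviation bound collapses to $\tfrac{1}{2}\|\xi\|_2^2/n$; this imposes an absolute (small) upper bound on $\gamma$ and on $c_3$, which is what appears in the statement.
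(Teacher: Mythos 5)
Your proposal is correct and follows essentially the same route as the paper: condition on $\xi$, localize $\hat\beta$ in Euclidean norm via Theorem~\ref{theorem_control_reg_norm}, use the interpolation identity $\frac1n\sum_i\langle X_i,\hat\beta-\beta^*\rangle^2=\|\xi\|_2^2/n$, and control the quadratic deviation over $H_{r_0,\rho}$ via Lemmas~\ref{generic_deviation} and~\ref{gaussian_mean_width} together with the definition of $\bar r(\gamma)$, calibrating $\gamma$ and $c_3$ so the deviation is at most $\tfrac12\|\xi\|_2^2/n$. The paper phrases the final step as a contradiction under the hypothesis $\|\Sigma^{1/2}(\hat\beta-\beta^*)\|_2\le r_0$ while you phrase it as a dichotomy, but these are the same argument.
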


\begin{proof}
	Let $r = \bar r^2(\gamma) \wedge  c_3 \frac{\| \xi \|_2^2}{n} $ and define
	\begin{align} \label{def_omega_1}
	& \Omega_1 = \{   \| \hat \beta - \beta^* \|_2 \leq \rho  \}, \quad \rho  = \| \beta^* \|_2 + \frac{4\| \xi \|_2}{\sqrt{r_{k^*}(\Sigma)}}\enspace.
	\end{align}
	From Theorem~\ref{theorem_control_reg_norm}, the event $\Omega_1$ holds with probability larger than $1- 2\exp(-n)$.  Until the end of the proof, we place ourselves on the event $\Omega_1$. Since $\hat \beta$ is an interpolator, we have $\bX ( \hat \beta - \beta^*) = \xi$ and it follows that
	\begin{equation} \label{start_point_theorem}
	\frac{1}{n} \sum_{i=1}^n \langle X_i, \hat \beta - \beta^* \rangle^2 =  \frac{\| \xi \|_2^2}{n} \enspace.
	\end{equation}
	Assume that $\| \Sigma^{1/2}  (\hat \beta  - \beta^* ) \|_2 \leq r$. On $\Omega_1$, conditionally on $\xi$, we have
	\begin{align*}
	\frac{\| \xi \|_2^2}{n}  & \leq \sup_{\substack{ \| \beta  - \beta^* \|_2 \leq \rho \\ \| \Sigma^{1/2}(\beta - \beta^*) \|_2  \leq r}} \frac{1}{n} \sum_{i=1}^n \langle X_i,  \beta - \beta^* \rangle^2 \\
	& \leq r^2 + \sup_{\substack{ \| \beta  - \beta^* \|_2 \leq \rho \\ \| \Sigma^{1/2}(\beta - \beta^*) \|_2  \leq r   }} \bigg | \frac{1}{n} \sum_{i=1}^n \langle X_i,  \beta - \beta^* \rangle^2 - \bE \langle X_i,  \beta - \beta^* \rangle^2 \bigg| \\
	& \leq  r^2 +  c \left(  \frac{w^2 \big(  H_{r , \rho} \big)}{n} + r \sqrt \frac{w^2 \big(H_{r , \rho}  \big)}{n} + r^2 \sqrt{ \frac{\log(1/\delta)}{n}}   \right) \enspace,
	\end{align*}
	where the last inequality holds with probabiliy larger than $1-\delta$, for $\delta \geq \exp(-n)$, according to Lemma~\ref{generic_deviation}. 
	Now, from Lemma~\ref{gaussian_mean_width} and using the inequality $\sqrt{ab}\leq a/2+b/2$ for $a,b>0$ we obtain 
	 \begin{align*}
	 \frac{\| \xi \|_2^2}{n}  &   \leq   c \bigg[ r^2 \left( 1+ \frac{\log(1/\delta)}{n}   \right)  \bigg) +    \frac{w^2 \big(  H_{r , \rho} \big) }{n} \bigg] \\
 	& \leq c \bigg[ r^2 \left( 1+ \frac{\log(1/\delta)}{n}   \right) +  2\frac{ \sum_{i=1}^p  \lambda_i(\Sigma) \rho^2 \wedge  r^2 }{n} \bigg]\\
 	& \leq  cc_3 \frac{\| \xi \|_2^2}{n} \left( 1+ \frac{\log(1/\delta)}{n}   \right) + 2c\gamma   \frac{\| \xi \|_2^2}{n} \enspace,
	 \end{align*}
	where the last inequality holds because of the definition of $\bar r(\gamma)$. Taking $\delta = \exp(-n)$  and $\gamma=1/(4c)$ leads to a contradiction for $c_3$ small enough. 
\end{proof}

\begin{Theorem} \label{upper_bound}
	For $\eta >0 $, let us define  
	\begin{align*}
	r^*(\eta) = \inf \bigg \{r >0 :  \sum_{i=1}^p  \lambda_i(\Sigma)  \wedge  r^2 \leq \eta n r^2  \bigg\}  \enspace.
	\end{align*}
	There exists $c_1,c_2,c_3 >0$ such that if $\eta$ is small enough, then with probability larger than $1-c_1\exp(-cn_2) $,
	$$
		\| \Sigma^{1/2} (\hat \beta - \beta^* ) \|_2^2  \leq   \big(\rho r^*(\eta)\big)^2  \vee c_3 \frac{\| \xi \|_2^2}{n} \enspace. 
	$$
\end{Theorem}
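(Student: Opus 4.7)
The approach mirrors that of Theorem~\ref{lower_bound}, but working on the \emph{upper} side of the prediction radius. First, I would condition on the event $\Omega_1 = \{\|\hat\beta - \beta^*\|_2 \leq \rho\}$ provided by Theorem~\ref{theorem_control_reg_norm}, which has probability at least $1 - 2\exp(-n)$. On $\Omega_1$, the interpolation property $\bX(\hat\beta - \beta^*) = \xi$ gives
\[
\frac{1}{n}\sum_{i=1}^n \langle X_i, \hat\beta - \beta^*\rangle^2 = \frac{\|\xi\|_2^2}{n},
\]
exactly as in~\eqref{start_point_theorem}. Set $r := \rho\, r^*(\eta)$; the goal is to show that either $\|\Sigma^{1/2}(\hat\beta - \beta^*)\|_2 \leq r$, giving the first term in the $\vee$, or else $\|\Sigma^{1/2}(\hat\beta - \beta^*)\|_2^2 \leq c_3 \|\xi\|_2^2/n$.

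The key scaling observation is that at this critical radius the localized Gaussian complexity is small. By Lemma~\ref{gaussian_mean_width},
\[
w^2(H_{r,\rho}) \leq 2\sum_{i=1}^p \lambda_i(\Sigma)\rho^2 \wedge r^2 = 2\rho^2\sum_{i=1}^p \lambda_i(\Sigma) \wedge r^*(\eta)^2 \leq 2\eta n r^2,
\]
where the last inequality uses the definition~\eqref{def:r^*} of $r^*(\eta)$ together with the fact that $r \mapsto r^{-2}\sum_i (\lambda_i(\Sigma) \wedge r^2)$ is non-increasing. Plugging this into Lemma~\ref{generic_deviation} with confidence level $\delta = e^{-c_2 n}$ yields an event $\Omega_2$ of probability at least $1 - e^{-c_2 n}$ on which
\[
\sup_{\substack{\|\beta - \beta^*\|_2 \leq \rho \\ \|\Sigma^{1/2}(\beta - \beta^*)\|_2 \leq r}} \bigg|\frac{1}{n}\sum_{i=1}^n \langle X_i, \beta-\beta^*\rangle^2 - \|\Sigma^{1/2}(\beta-\beta^*)\|_2^2\bigg| \leq \kappa\, r^2,
\]
with $\kappa = c(2\eta + \sqrt{2\eta} + \sqrt{c_2})$, which can be made strictly less than $1/2$ by choosing $\eta$ and $c_2$ small enough.

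To convert this localized deviation into the desired conclusion, I would then apply a homogeneity/rescaling trick on $\Omega_1 \cap \Omega_2$. If $\|\Sigma^{1/2}(\hat\beta - \beta^*)\|_2 \leq r$, there is nothing to prove. Otherwise, set $\alpha := r/\|\Sigma^{1/2}(\hat\beta - \beta^*)\|_2 \in (0,1)$ and $\beta := \beta^* + \alpha(\hat\beta - \beta^*)$. Then $\|\beta - \beta^*\|_2 \leq \alpha\rho \leq \rho$ and $\|\Sigma^{1/2}(\beta - \beta^*)\|_2 = r$, so $\beta$ is admissible in the supremum above. By linearity and the interpolation identity, $\frac{1}{n}\sum_i \langle X_i, \beta - \beta^*\rangle^2 = \alpha^2 \|\xi\|_2^2/n$, so the deviation bound gives $|r^2 - \alpha^2 \|\xi\|_2^2/n| \leq \kappa\, r^2$. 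Substituting $\alpha^2 = r^2/\|\Sigma^{1/2}(\hat\beta - \beta^*)\|_2^2$ and rearranging yields $\|\Sigma^{1/2}(\hat\beta - \beta^*)\|_2^2 \leq \|\xi\|_2^2 / [(1-\kappa)n]$, i.e., the second term in the $\vee$ with $c_3 = 1/(1-\kappa)$.

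The hard part will be precisely this last rescaling step. Lemma~\ref{generic_deviation} only controls the empirical process on a localized set at a single, fixed scale $r$, whereas a priori $\hat\beta$ could lie at any prediction radius. Rescaling by $\alpha$ projects $\hat\beta - \beta^*$ down onto the sphere of prediction-radius $r$ while \emph{shrinking} the $\ell_2$ radius, so both constraints defining $H_{r,\rho}$ hold simultaneously; this homogeneity is what allows us to apply the deviation bound once at the critical scale rather than carry out a peeling argument over a dyadic range of radii.
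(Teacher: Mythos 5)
Your proof is correct and follows essentially the same route as the paper's: case split on whether the prediction radius exceeds the critical scale, then rescale $\hat\beta-\beta^*$ onto the critical sphere, use the interpolation identity together with the quadratic-process deviation bound of Lemma~\ref{generic_deviation} and the Gaussian-width computation of Lemma~\ref{gaussian_mean_width}, with $\eta$ and the confidence level tuned so the deviation is at most half of $r^2$. The only point to tighten: you invoke Lemma~\ref{generic_deviation} on $H_{\rho r^*(\eta),\rho}$, whose radii are random through $\rho=\rho(\|\xi\|_2)$, whereas the lemma is stated for fixed $r,\rho$ (and the theorem makes no independence assumption on $\xi$); this is harmless because by degree-2 homogeneity your event coincides with the deterministic event on $H_{r^*(\eta),1}$ — which is exactly the set the paper localizes at — but that one-line reduction should be made explicit.
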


\begin{proof}
	Until the end of the proof, we place ourselves on the event $\Omega_1$, defined in~\eqref{def_omega_1}. The proof is splitted in two parts.  \\
	
	1) Consider first the case where $\| \Sigma^{1/2}(\hat \beta - \beta^* ) \|_2 \leq r^*(\eta) \| \hat \beta - \beta^* \|_2$.\\
	Then, on $\Omega_1$, it follows that $\| \Sigma^{1/2}(\hat \beta - \beta^* ) \|_2 \leq  r^*(\eta) \rho$ so the conclusion of Theorem~\ref{upper_bound} holds.\\
	
	2) Now, consider the case where $\| \Sigma^{1/2}(\hat \beta - \beta^* ) \|_2 \geq r^*(\eta) \| \hat \beta - \beta^* \|_2$.\\
	 From~\eqref{start_point_theorem}, we have
	$$
	\frac{\| \xi \|_2^2}{n} =  \frac{\| \Sigma^{1/2}(\hat \beta - \beta^* ) \|_2^2}{n r^*(\eta)^2} \sum_{i=1}^n \bigg< X_i, r^*(\eta)\frac{\hat \beta - \beta^*}{\| \Sigma^{1/2}(\hat \beta - \beta^* ) \|_2} \bigg>^2 \enspace.
	$$
	Let us define $\tilde \beta -\beta^*= r^*(\eta) (\hat \beta - \beta^* )/ \| \Sigma^{1/2}(\hat \beta - \beta^* ) \|_2$. Since,  $\| \Sigma^{1/2}(\hat \beta - \beta^* ) \|_2 \geq r^*(\eta) \| \hat \beta - \beta^* \|_2$, we have $\| \tilde \beta - \beta^* \|_2 \leq 1$ and $\| \Sigma^{1/2}(\tilde \beta - \beta^*) \|_2 = r^*(\eta) $  and it follows that 
	\begin{align}
\notag	\frac{\| \xi \|_2^2}{n}&=\frac{\| \Sigma^{1/2}(\hat \beta - \beta^* ) \|_2^2}{nr^*(\eta)^2}     \sum_{i=1}^n \langle X_i,  \tilde\beta - \beta^* \rangle^2\\
\label{eq:Int1}	&\geq \frac{\| \Sigma^{1/2}(\hat \beta - \beta^* ) \|_2^2}{r^*(\eta)^2} \inf_{\substack{ \| \beta  - \beta^* \|_2 \leq 1 \\ \| \Sigma^{1/2}(\beta - \beta^*) \|_2  = r^*(\eta)  }} \frac{1}{n}   \sum_{i=1}^n \langle X_i,  \beta - \beta^* \rangle^2  \enspace.
	\end{align}
	Morover, we have 
	\begin{align}
\notag	 \inf_{\substack{ \| \beta  - \beta^* \|_2 \leq 1 \\ \| \Sigma^{1/2}(\beta - \beta^*) \|_2  = r^*(\eta)  }} & \frac{1}{n}   \sum_{i=1}^n \langle X_i,  \beta - \beta^* \rangle^2 \\ 
\label{eq:Int2}	 & \geq  r^*(\eta)^2 -\underbrace{\sup_{\substack{ \| \beta  - \beta^* \|_2 \leq 1 \\ \| \Sigma^{1/2}(\beta - \beta^*) \|_2  = r^*(\eta)  }}\bigg|  \frac{1}{n}   \sum_{i=1}^n \langle X_i,  \beta - \beta^*\rangle^2  - \bE \langle X_i,  \beta - \beta^*\rangle^2 \bigg|}_{\star} \enspace.
	\end{align}
	Finally, from Lemmas~\ref{generic_deviation} and~\ref{gaussian_mean_width} and the definition of $r^*(\eta)$, we have
	\begin{align*}
	\star & \leq c \left(   \frac{ \sum_{i=1}^p  \lambda_i(\Sigma)  \wedge  r^*(\eta)^2 }{n} + r^*(\eta) \sqrt \frac{ \sum_{i=1}^p  \lambda_i(\Sigma)  \wedge  r^*(\eta)^2 }{n} + r^*(\eta)^2 \sqrt{\frac{\log(1/\delta)}{n}}  \right) \\
	& \leq c \eta r^*(\eta)^2  + c \sqrt \eta r^*(\eta)^2 + c r^*(\eta)^2 \sqrt{\frac{\log(1/\delta)}{n}}  \leq \frac{r^*(\eta)^2}{2} \enspace.
	\end{align*}
	The last inequality holds if $\eta$ is small enough and $\delta = \exp(-c_1n)$ with $c_1>0$ small enough. Putting this bound into \eqref{eq:Int2} yields 
	\[
	\inf_{\substack{ \| \beta  - \beta^* \|_2 \leq 1 \\ \| \Sigma^{1/2}(\beta - \beta^*) \|_2  = r^*(\eta)  }}  \frac{1}{n}   \sum_{i=1}^n \langle X_i,  \beta - \beta^* \rangle^2\geqslant \frac{r^*(\eta)^2}{2}\enspace.
	\]
	Together with \eqref{eq:Int1}, this finally leads to
	$$
	 \| \Sigma^{1/2}(\hat \beta - \beta^* ) \|_2^2\leq 2\frac{\| \xi \|_2^2}{n}\enspace.
	$$
\end{proof}

\subsection{Proof of Corollaries~\ref{cor:CompBartlett} and~\ref{cor:lower_bound}}\label{sec:ProofCorollaries}
\paragraph{Proof of Corollary~\ref{cor:CompBartlett}}
	For any $r>0$ and $k = \lfloor \eta n/2 \rfloor  := cn$ ,
	$$
	\sum_{i=1}^p \lambda_i(\Sigma) \wedge r^2  \leq r_{k}(\Sigma)  + \big( \lfloor \eta n/2 \rfloor  \big) r^2 \leq   r_{k}(\Sigma)  + \big(\eta n/2 \big) r^2 \enspace,
	$$
	and it follows that
	\[
	\sum_{i=1}^p \lambda_i(\Sigma) \wedge r^2\leq \eta n r^2,\qquad \text{if}\qquad r^2\geq \frac2{\eta}\frac{r_{k}(\Sigma)}{ n}\enspace.
	\]
	Hence,
	$$
	r^*(\eta)^2 \lesssim \frac{r_{cn}(\Sigma)}{n} \enspace.
	$$
	Thus,
	
\[
\rho^2 r^*(\eta)^2 \leq \bigg(\frac{\| \beta^* \|_2^2 r_{cn}(\Sigma)}{n} \bigg) \vee \bigg( \frac{\| \xi \|_2^2 r_{cn}(\Sigma)}{n r_{k*}(\Sigma)} \bigg) \leq \bigg(\frac{\| \beta^* \|_2^2 r_{cn}(\Sigma)}n \bigg) \vee \bigg( \frac{\| \xi \|_2^2}n \bigg)\enspace. 
\]
Therefore the result follows from Theorem~\ref{theorem_loss_quadra}
\paragraph{Prood Corollary~\ref{cor:lower_bound}}
Assume that $\| \beta^* \|_2^2/ \| \xi \|_2^2 \leq 1/r_{k^*}(\Sigma) $. Thus, we have $\rho \lesssim \| \xi \|_2 / \sqrt{r_{k^*}(\Sigma) }$. For any $r>0$, from the definition of $\bar k$ given in~\eqref{eq_kbar} 
$$
\sum_{i=1}^p \lambda_i(\Sigma) \rho^2  \wedge r^2   \lesssim \| \xi \|_2^2 \frac{r_{\bar k}(\Sigma) }{r_{k^*}(\Sigma) }  + \bar k r^2 \leq \frac{\gamma}{2}  \| \xi \|_2^2 + \bar k r^2 \enspace,
$$
and it follows that $ \bar r^2(\gamma) \gtrsim \| \xi \|_2^2/  (\gamma \bar k) $ and therefore the result follows from Theorem~\ref{theorem_loss_quadra}.

\appendix
\section{Supplementary material}

\subsection{Sub-exponential random variables: definitions and properties}
The following definition and propositions can be found in~\cite{wainwright2019high}. 
\begin{Definition} \label{def_subexp} 
	A random variable $X$ with mean $\bE [X] = \mu$ is called sub-exponential with non-negative parameters $(\nu,b)$ if
	\begin{equation} 
	\bE \big [  e^{\lambda(X-\mu)}   \big] \leq e^{\nu^2 \lambda^2/2} \quad \textnormal{for all} \quad |\lambda| \leq 1/b\enspace.
	\end{equation}
\end{Definition}

\begin{Proposition} \label{proposition_sum_subexp}
	Let $X_1, \cdots, X_n$ be independent random variables such that $X_i$ is sub-exponential with parameters $(\nu_i,b_i)$. Then $Y = \sum_{i=1}^n X_i$ is sub-exponential with parameters $\big( (\sum_{i=1}^n \nu_i^2)^{1/2}, \max_{i =1,\cdots,n} b_i \big)$.
\end{Proposition}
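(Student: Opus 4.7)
The plan is a direct MGF-level computation, exploiting independence to turn the moment generating function of the sum into the product of the individual MGFs. First, set $\mu_i = \E[X_i]$ and $\mu_Y = \sum_{i=1}^n \mu_i = \E[Y]$, and let $b_{\max} = \max_{i=1,\ldots,n} b_i$. The key observation is that if $|\lambda| \leq 1/b_{\max}$, then automatically $|\lambda| \leq 1/b_i$ for every $i$, so the sub-exponential MGF bound from Definition~\ref{def_subexp} applies to each $X_i$ simultaneously.

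Next, I would write
\[
\E\bigl[e^{\lambda(Y-\mu_Y)}\bigr] = \E\Bigl[\exp\Bigl(\lambda\sum_{i=1}^n (X_i - \mu_i)\Bigr)\Bigr] = \prod_{i=1}^n \E\bigl[e^{\lambda(X_i-\mu_i)}\bigr],
\]
where the factorization uses independence of the $X_i$'s. Applying Definition~\ref{def_subexp} to each factor yields
\[
\prod_{i=1}^n \E\bigl[e^{\lambda(X_i-\mu_i)}\bigr] \leq \prod_{i=1}^n e^{\nu_i^2 \lambda^2/2} = \exp\!\Bigl(\tfrac{\lambda^2}{2}\sum_{i=1}^n \nu_i^2\Bigr),
\]
valid for all $|\lambda|\leq 1/b_{\max}$. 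Matching this against Definition~\ref{def_subexp} with parameters $\nu = (\sum_i \nu_i^2)^{1/2}$ and $b = b_{\max}$ gives exactly the claimed sub-exponentiality of $Y$.

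There is no real obstacle here: the only subtlety is making sure the interval of validity for the pointwise MGF bounds is the intersection $\bigcap_i \{|\lambda|\leq 1/b_i\} = \{|\lambda|\leq 1/b_{\max}\}$, which explains why the resulting parameter is the maximum of the $b_i$'s rather than, say, their sum. No truncation, Chernoff optimization, or tail integration is needed at this stage, since the conclusion is stated at the MGF level rather than as a tail bound.
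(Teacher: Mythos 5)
Your proof is correct and is exactly the standard argument: the paper itself does not prove this proposition but cites \cite{wainwright2019high}, where the proof is precisely this MGF factorization via independence, with the validity interval shrinking to $|\lambda|\leq 1/\max_i b_i$. Nothing is missing.
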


\begin{Proposition} [Sub-exponential tail bound] \label{proposition_tail_subexp}
	Suppose that $X$ is sub-exponential with parameters $(\nu,b)$. Then
	\begin{equation}
	\bP \big(  |X-\mu| \geq t  \big) \leq \left\{
	\begin{array}{ll}
	2 e^{-t^2/(2\nu^2)} & \mbox{if } 0 < t \leq \nu^2/b \enspace,\\
	2 e^{-t/(2b)} & \mbox{if } t \geq \nu^2/b\enspace.
	\end{array}
	\right.
	\end{equation}
\end{Proposition}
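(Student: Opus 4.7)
The plan is to apply the Chernoff (exponential Markov) method and then optimize over the admissible range of the parameter. First I would note that for any $\lambda\in[0,1/b]$, Markov's inequality applied to the non-negative random variable $\exp(\lambda(X-\mu))$ gives
\[
\bP(X-\mu\geq t)\leq e^{-\lambda t}\E\!\left[e^{\lambda(X-\mu)}\right]\leq \exp\!\Bigl(\tfrac{\nu^2\lambda^2}{2}-\lambda t\Bigr),
\]
where the second inequality uses the sub-exponential hypothesis of Definition~\ref{def_subexp}. The task then reduces to minimizing $\psi(\lambda):=\nu^2\lambda^2/2-\lambda t$ over $\lambda\in[0,1/b]$. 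The unconstrained minimizer is $\lambda^\star=t/\nu^2$ with minimum value $-t^2/(2\nu^2)$.

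Next I would split into two cases according to whether the unconstrained optimum is feasible. If $0<t\leq \nu^2/b$, then $\lambda^\star\leq 1/b$ is admissible, and substituting $\lambda^\star$ directly yields the Gaussian-type bound $\exp(-t^2/(2\nu^2))$. If on the other hand $t\geq \nu^2/b$, the minimizer lies outside $[0,1/b]$ and $\psi$ is decreasing on the feasible interval, so the best feasible choice is $\lambda=1/b$. This gives exponent $\nu^2/(2b^2)-t/b$; using $t\geq \nu^2/b$, one obtains $\nu^2/(2b^2)\leq t/(2b)$, whence the exponent is at most $-t/(2b)$ and so $\bP(X-\mu\geq t)\leq \exp(-t/(2b))$.

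Finally I would handle the lower tail by the same argument applied to $-X$. Since Definition~\ref{def_subexp} holds for all $|\lambda|\leq 1/b$ (a symmetric interval), $-X$ is also sub-exponential with parameters $(\nu,b)$ and mean $-\mu$, so the identical bound applies to $\bP(\mu-X\geq t)$. A union bound then produces the factor $2$ in the two-sided inequality, completing the proof of Proposition~\ref{proposition_tail_subexp}.

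There is no real obstacle here — the only delicate point is verifying that in the regime $t\geq \nu^2/b$ the function $\psi$ is monotone on $[0,1/b]$ and that the algebraic simplification $\nu^2/(2b^2)\leq t/(2b)$ is applied correctly; this is precisely what forces the exponential (rather than Gaussian) decay in the large-deviation regime and explains the phase transition at $t=\nu^2/b$.
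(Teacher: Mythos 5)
Your proof is correct. The paper does not prove this proposition itself — it is stated as a standard fact imported from \cite{wainwright2019high} — and your argument (Chernoff bound on $e^{\lambda(X-\mu)}$ for $\lambda\in[0,1/b]$, optimization of $\nu^2\lambda^2/2-\lambda t$ with the case split at $t=\nu^2/b$, monotonicity of the exponent on the feasible interval in the large-$t$ regime, then symmetry of the definition in $\lambda$ plus a union bound for the two-sided statement) is exactly the standard proof given in that reference, so there is nothing to reconcile.
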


\begin{footnotesize}
	\bibliographystyle{amsplain}
	\bibliography{biblio}
\end{footnotesize} 

\end{document}